\theoremstyle{plain}
\newtheorem{thm}{Theorem}[section]
\newtheorem{prop}[thm]{Proposition}
\newtheorem{cor}[thm]{Corollary}
\newtheorem{rem}[thm]{Remark}
\theoremstyle{definition}
\newcommand{\la}{\lambda}
\newcommand{\bl}{\boldsymbol{\lambda}}
\newcommand{\bmu}{\boldsymbol{\mu}}
\newcommand{\tbtau}{\tilde{\boldsymbol{\tau}}}
\newcommand{\tbl}{\tilde{\boldsymbol{\lambda}}}
\newcommand{\trh}{\tilde{\boldsymbol{\rho}}}
\newcommand{\f}{\mathbb{F}}
\numberwithin{equation}{section} \errorcontextlines=0
\newcommand{\GL}{\mathrm{GL}}
\newcommand{\X}{\mathcal{X}}
\begin{document}
\title{On character values of $\GL_n(\mathbb F_q)$}
\author{Naihuan Jing}
\address{Department of Mathematics, North Carolina State University, Raleigh, NC 27695, USA}
\email{jing@ncsu.edu}
\author{Yu Wu}
\address{School of Mathematics, South China University of Technology,
Guangzhou, Guangdong 510640, China}
\email{wywymath@163.com}
\subjclass[2010]{Primary: 20C33, 17B69; Secondary: 05E10}\keywords{Finite general linear groups, characters,
vertex operators, unipotent classes, Steinberg characters}
\thanks{Partially supported by
Simons Foundation grant  MP-TSM-00002518 and
NSFC grant 12171303.}

\begin{abstract}
In this paper, we use vertex operator techniques to compute character values on unipotent classes of $\GL_n(\mathbb F_q)$. By realizing the Grothendieck ring $R_G=\bigoplus_{n\geq0}^\infty R(\GL_n(\mathbb F_q))$ as
Fock spaces, we formulate the Murnaghan-Nakayama rule of $\GL_n(\mathbb F_q)$ between Schur functions colored by an orbit $\phi$ of linear characters of $\overline{\mathbb F}_q$ under the Frobenius automorphism on and modified Hall-Littlewood functions colored by $f_1=t-1$, which provides detailed information on
the character table of $\GL_n(\mathbb F_q)$. As applications, we use vertex algebraic methods to determine the Steinberg characters of  $\GL_n(\mathbb F_q)$, which were previously determined by Curtis-Lehrer-Tits via geometry of homology groups of spherical buildings and Springer-Zelevinsky utilizing Hopf algebras.
\end{abstract}
\maketitle

\section{Introduction}
 Frobenius introduced the concept of characters of finite groups by extending those of commutative groups in number theory. One of the first examples is determination of all irreducible characters of the symmetric groups using the Frobenius characteristic map and symmetric functions. Schur successively generalized the Frobenius characteristic map for the double covering groups of the symmetric group aka the spin groups \cite{Sc}. Specht obtained Frobenius-type character formula for the wreath products \cite{Sp}.  In a renowned paper \cite{jaG}, Green identified all irreducible characters of $\GL_n(\f_q)$ and further developed the correspondence between symmetric functions and representation theory. In \cite{FJW} we reformulated Specht's theory as applications of the McKay correspondence and vertex representations of affine Lie algebras, which had generalized the first author's earlier work \cite{Jing1} on spin groups.

 The classification of irreducible representations of finite Chevalley groups can be reduced to that of unipotent representations of the endoscopic groups, which leads to a program to essentially determine all irreducible characters \cite{DL, L, L1, L2} (see also \cite{DM, GM}).

 Recently
we have studied character theory of $\GL_n(\f_q)$ by realizing the Grothendieck ring $R_G$ as two interrelated Fock spaces  \cite{JW} and expressed the irreducible character values $\chi^{\tbl}_{\bmu}$
 as matrix coefficients of two special vertex operators, where the partitions $\tbl$ and $\bmu$ are colored by
 orbits of the Frobenius automorphisms. The goal of this paper is to
 apply techniques of vertex operators \cite{Jing1, Jing2, JW} to compute characters at unipotent classes.

The unipotent conjugacy classes of $\GL_n(\f_q)$ are in one-to-one correspondence to
partition-valued functions colored only by the monic polynomial $t-1$. L\'ubeck and Malle \cite{LM} have formulated a Murnaghan-Nakayama rule for general Chevalley groups using explicit information of Lusztig restriction.
We also obtain a direct Murnaghan-Nakayama-like rule for $\GL_n(\f_q)$ on the unipotent elements by vertex operators, bypassing reference to Green polynomials \cite{Mor, JL}. The rule determines character values on the unipotent classes as explicit linear combinations of the irreducible characters on unipotent classes in lower rank groups.

As applications, we also use vertex algebraic techniques to compute the Steinberg characters in Prop. \ref{St}, which were previously determined by Curtis, Lehrer and Tits \cite{CLT} using rational homology groups and by Springer and Zelevinsky \cite{SZ} using Hopf algebras \cite{Ze}.

The structure of this article is as follows. In Sect.2 after reviewing partitions and basic notions, we recall two Heisenberg algebras in \cite{JW} which provide Fock space realization of the Gorthendieck group. We express the character values on unipotent conjugacy classes as matrix coefficients of vertex operators in \eqref{ful:char}. In Sect.3, we introduce a family of symmetric functions $q_r(X_{f_1};q)$ to present the base change formula in Theorem \ref{thm:q-S} for two important bases $\{p_n(f)\}$ and $\{p_n(\phi)\}$ of $\mathcal F_G$. We then show the Murnaghan-Nakayama rule (Theorem \ref{MN}, Corollary \ref{MMN} and Theorem \ref{table}) and a new recursive formula \eqref{cha table}. In the last part of the section, We
study the characters $\chi^{(m)(\phi)}_{\mu(f_1)}$ and $\chi^{\la(\phi)}_{(n)(f_1)}$ in Proposition \ref{single1}, we also prove the famous Steinberg character in Proposition \ref{St}. Sect.3 also deals with the case of $\mu(f_1)$ being hook-type (Proposition \ref{hook}).

\section{Preliminaries}

A {\it partition} $\la=(\la_1,\la_2,\ldots)$ of weight $n$, denoted as $\la\vdash n$, is a weakly decreasing sequence of nonnegative integers such that $\sum_i\la_i=|\la|=n$. The number $l(\la)$ of nonzero parts $\la_i$ is called the {\it length}. When the parts are not necessarily descending, $\la$ is called a {\it composition} of $n$, denoted as $\la\vDash n$.
The set of all partitions (resp. compositions) will be denoted as $\mathcal P$ (resp. $\mathcal C$). Two compositions $\la\subset\mu$ if $\la_i\leq\mu_i$ for any $i\geq1$. For two composition $\la=(\la_1,\ldots,\la_l)$ and $\mu=(\mu_1,\ldots,\mu_k)$, we define $[\la,\mu]=(\la_1,\ldots,\la_l,\mu_1,\ldots,\mu_k)$. For example, $[(231),(22)]=(23122)$.

Let $\overline{\mathbb F}_q$ be the algebraic closure of the finite field $\mathbb F_q$ with the Frobenius automorphism $F$: $x\mapsto x^q$. Let $\mathbb F_{q^n}$ be the canonical field extension of $\mathbb F_q$ of degree $n$. Let $M$ denote the multiplicative group of $\overline{\mathbb F}_q$, and let $M_n$ be the multiplicative group of $\mathbb F_{q^n}\subset\overline{\mathbb F}_q$, therefore $M=\bigcup_{n\geq1}M_n$ and $M_m\subset M_n$ if $m$ divides $n$.

Let $M_n^*$ be the group of complex characters of $M_n$ and let $M^*=\bigcup_{n\geq1}M_n^*$ be the character group of $M$, so $F$ induces an automorphism on $M^*$. We have $M_m^*\subset M_n^*$ if $m$ divides $n$.

Let $\Phi$ (resp. $\Phi^*$) be the set of $F$-orbits in $M$ (resp. $M^*$). We also denote by $\Phi_k$ (resp. $\Phi^*_k$) the set of $F^k$-orbits in $M$ (resp. $M^*$). Every orbit in $\Phi$ corresponds to a monic irreducible polynomial over $\mathbb F_q$ excluding $t$. For example, $\{x,x^q,\ldots,x^{q^{d-1}}\}$ is an orbit of $\Phi$, then the corresponding monic polynomial $f=\prod_{i=0}^{d-1}(t-x^{q^i})$.

We denote by $|\phi|$ the number of elements in the orbit $\phi\in\Phi^*$. Let $\X$ be either $\Phi$ or $\Phi^*$, we define a {\it $\X$-colored partition} to be a partition-valued function \cite{IJS} $\X\to\mathcal P$, which is a sequence partitions indexed by $\X$. The {\it weight} of a $\X$-colored partition $\bl$ is defined by
\begin{align}
    \|\bl\|=\sum_{x\in\X}|x||\bl(x)|,\quad\mbox{where}\; |x|=\begin{cases} |x| &\mbox{if $x\in\Phi^*$}\\
    d(x) &\mbox{if $x\in\Phi$.}
\end{cases}
\end{align}
Clearly, $\Phi_n=\{f\in\Phi|\,d(f)|n\}$ and $\Phi_n^*=\{\phi\in\Phi^*|\,d(\phi)|n\}$. We define
\begin{align}
    \mathcal P_n^{\X}&=\{\mbox{$\X$-colored partitions $\bl$ such that $\|\bl\|=n$}\},\\
    C_n^{\X}&=\{\mbox{$\X$-colored compositions $\bl$ such that $\|\bl\|=n$}\}.
\end{align}

For $\bl,\bmu\in\mathcal C_n^{\X}$, we denote $\bl\subset\bmu$ if $\bl(x)\subset\bmu(x)$ for each $x\in\X$. Let
\begin{align}
    n(\bl)&=\sum_{x\in\X}|x|n(\tbl(x)), \quad\mbox{where}\; n(\la)=\sum_{i=1}^{l(\la)}(i-1)\la_i,\\
    Z_{\tbl}&=\prod_{\phi\in\Phi}z_{\tbl(\phi)}, \quad\quad\mbox{where}\; z_{\la}=\prod_{i=1}i^{m_i}m_i!\;\mbox{for $\la=(1^{m_1}2^{m_2}\ldots)$}.
\end{align}

\begin{prop}\cite{jaG,Mac}
The conjugate classes of $G_n=\mathrm{GL}_n(\mathbb F_q)$ are parameterized by $\Phi$-colored partitions $\bmu\in\mathcal P_n^{\Phi}$ $\bmu:\Phi\to\mathcal{P}$, which are called the cycle-types of the conjugacy classes. We denote the set of partition corresponding to unipotent conjugacy classes by $\{\mu(f_1),\mu\vdash\mathcal P\}$. Moreover, the unipotent conjugacy classes of $G_n$ correspond to $\bmu\in\mathcal P_n^{\Phi}$ such that $\bmu(f)=0$ for each $f\neq f_1,f_1= t-1$.

\end{prop}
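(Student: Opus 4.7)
The plan is to run through the classical correspondence, which goes back to Green and is presented in Macdonald's book, using the rational canonical form viewed module-theoretically. Given $g\in G_n=\GL_n(\mathbb F_q)$, I would regard $V=\mathbb F_q^n$ as an $\mathbb F_q[t]$-module by letting $t$ act as $g$. Two elements $g,g'\in G_n$ are conjugate in $G_n$ precisely when the resulting $\mathbb F_q[t]$-modules are isomorphic, since an isomorphism of modules is exactly a linear change of basis intertwining the actions of $g$ and $g'$.

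Next I would apply the structure theorem for finitely generated modules over the PID $\mathbb F_q[t]$ to decompose
\begin{equation*}
V\;\cong\;\bigoplus_{f}\bigoplus_{i\ge 1}\bigl(\mathbb F_q[t]/(f^{\bmu(f)_i})\bigr),
\end{equation*}
where $f$ ranges over monic irreducible polynomials in $\mathbb F_q[t]$ and, for each such $f$, the multiset of exponents is encoded by a partition $\bmu(f)$ (with $\bmu(f)=0$ for all but finitely many $f$). The crucial observation is that $g$ is invertible, so $t$ acts invertibly on $V$; this forces $\bmu(t)=0$, i.e.\ the monic irreducible polynomial $t$ does not occur. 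The remaining monic irreducibles are exactly those parameterizing the $F$-orbits in $\Phi$: an orbit $\{x,x^q,\dots,x^{q^{d-1}}\}$ of length $d(f)$ corresponds to $f=\prod_{i=0}^{d-1}(t-x^{q^i})$. A dimension count gives
\begin{equation*}
n=\dim_{\mathbb F_q}V=\sum_{f\in\Phi}d(f)\,|\bmu(f)|=\|\bmu\|,
\end{equation*}
so $\bmu\in\mathcal P_n^{\Phi}$. Conversely, any $\bmu\in\mathcal P_n^{\Phi}$ determines an $\mathbb F_q[t]$-module of dimension $n$ on which $t$ acts invertibly, hence a conjugacy class in $G_n$; this gives the bijection.

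For the unipotent statement, recall that $g$ is unipotent iff $g-I$ is nilpotent, iff the minimal polynomial of $g$ is a power of $t-1=f_1$, iff the only irreducible factor appearing in the primary decomposition of $V$ as an $\mathbb F_q[t]$-module is $f_1$. In our notation this is exactly $\bmu(f)=0$ for all $f\neq f_1$, so the unipotent classes are parameterized by the single partition $\mu=\bmu(f_1)\vdash n$, with Jordan blocks of sizes $\mu_1,\mu_2,\dots$ corresponding to the cyclic summands $\mathbb F_q[t]/((t-1)^{\mu_i})$.

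The only mild obstacle is the bookkeeping that $F$-orbits in $M$ really are in bijection with monic irreducibles in $\mathbb F_q[t]\setminus\{t\}$, which is a standard Galois-theoretic identification and requires no essential work once one notes that an irreducible polynomial over $\mathbb F_q$ is the minimal polynomial of its roots, which form a single Frobenius orbit in $\overline{\mathbb F}_q^\times$. Since the proposition is cited from Green \cite{jaG} and Macdonald \cite{Mac}, I would likely present only a brief sketch of these steps and refer the reader to those sources for full details.
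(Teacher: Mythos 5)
Your argument is correct and is exactly the classical one from the sources the paper cites (Green, and Macdonald Ch.~IV): the paper itself gives no proof of this proposition, deferring entirely to those references, and your module-theoretic derivation via the structure theorem over the PID $\mathbb F_q[t]$, the exclusion of the polynomial $t$ by invertibility, and the characterization of unipotent classes by $\bmu(f)=0$ for $f\neq f_1$ is precisely the standard treatment there. No gaps.
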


Let $R(G_n)$ be the Hilbert space of complex-valued class functions on $\GL_n(\mathbb F_q)$ equipped with the canonical scalar product given by $(f, g)=\frac1{|G_n|}\sum_{x\in G_n}f(x)g(x^{-1})$. We introduce the infinite-dimensional graded space
\begin{align}
    R_G=\bigoplus_{n\geq0}R(G_n)
\end{align}
which becomes an associative algebra
under the multiplication $f\circ g=\mathrm{Ind}_{P_{(m,n)}}^{G_{m+n}}f\otimes g$ for
$f\in R(G_m)$ and $g\in R(G_n)$, where $P_{(m,n)}$ is the parabolic subgroup of $G_n$ consisting of
matrices of upper triangular matrices of shape $m\times n$. Then $R_G$ can be
realized as a tensor product of Hall algebras \cite{jaG}\cite{JW}\cite[Chap.IV.3]{Mac}:
$$R_G=\bigotimes_{f\in\Phi}H(\mathbb F_q[t]_{(f)})\otimes_{\mathbb Z}\mathbb C.$$

Let $p_n(\phi),\phi\in\Phi^*$ denote the n-th power sum function of the independent variables $X_{i,\phi}$, $p_n(\phi)=p_n(X_{1,\phi},X_{2,\phi},\ldots)=\sum_i X_{i,\phi}^n$. Let
\begin{align}
    \mathcal F_G=\bigotimes_{\phi\in\Phi^*}\mathbb C[p_n(\phi):n\geq 1]\simeq\bigotimes_{\phi\in\Phi^*}\Lambda_{\mathbb C}
\end{align}
be the polynomial algebra generated by the $p_n(\phi)$, $\mathcal{F}_G$ is a graded algebra with degree given by $\mathrm{deg}(p_n(\phi))=n|\phi|$. Note that $\mathcal F_G$ is the unique level one irreducible representation of the infinite-dimensional Heisenberg Lie algebra $\widehat{\mathfrak h}_{\hat{\overline{\mathbb F}}_q}$ spanned by the $p_n(\phi)$ and the central element $C$ subject to the relation:
\begin{align}\label{e:Heisenberg1}
[p_m(\phi), p_n(\phi')]=m\delta_{m, -n}\delta_{\phi, \phi'}C,
\end{align}
which equips this Fock space with the inner product $\langle p_{\lambda}(\phi), p_{\mu}(\phi')\rangle=z_{\lambda}\delta_{\la\mu}\delta_{\phi, \phi'}$. More generally,
\begin{align}
    \langle p_{\bl},p_{\bmu} \rangle=Z_{\bl}\delta_{\bl\bmu},\quad\mbox{where}\; p_{\bl}=\prod_{\phi\in\Phi}p_{\tbl(\phi)}(\phi).
\end{align}
For each $\phi\in\Phi^*$, the action $p_n(\phi): \mathcal{F}_G\to\mathcal{F}_G$ is regarded as multiplicative operator of degree $n|\phi|$, and the adjoint operator is the differential operator $p_n^*(\phi)=n\dfrac{\partial}{\partial p_n(\phi)}$ of degree $-n|\phi|$.

We introduce the Schur vertex operator $S(\phi;z)$ as the following map $\mathcal{F}_G\to\mathcal{F}_G[[z,z^{-1}]]$:
\begin{align}\label{O:S}
S(\phi; z)&=\mbox{exp} \left( \sum\limits_{n\geqslant 1} \dfrac{1}{n}p_n(\phi)z^{n} \right) \mbox{exp} \left( -\sum \limits_{n\geqslant 1} \frac{\partial}{\partial p_n(\phi)}z^{-n} \right)=\sum_{n\in\mathbb Z}S_n(\phi)z^{n}.
\end{align}
The operators $S_n(\phi)\in\mathrm{End}(\mathcal F_G)$ are the vertex operators realizing the Schur functions,
it is known from \cite{Jing1,Jing3} that $S_{\tbl(\phi)}.1=S_{\tbl(\phi)}\cdots S_{\tbl(\phi)_l}.1=s_{\tbl(\phi)}(\phi)$ in the power sum $p_{\tbl(\phi)}(\phi)$, and $S_{-n}(\phi).1=\delta_{n,0}$ for $n\geq 0$.

For each monic irreducible polynomial $f\in\Phi$, introduce the power-sum symmetric functions in the second set of variables $Y_{i,f}$ via the transform
\begin{align}\label{linear1}
    p_n(f)=\dfrac{(-1)^{n|f|-1}}{q^{n|f|}-1}\sum_{\xi\in M^*_{n|f|}}\overline{\xi(x)}p_{n|f|/|\phi|}(\phi_\xi),
\end{align}
where $x$ is a fixed root of $f$, $\phi_\xi\in\Phi^*$ is the orbit of $\xi$ and $p_r(\phi)=0$ unless $r$ is a nonnegative integer. We remark that any fixed root $x$ of $f$ gives the same $p_n(f)$ in \eqref{linear1}. By the orthogonality of irreducible characters of the group $M_n$,
\begin{align}\label{linear2}
    p_n(\phi)=(-1)^{n|\phi|-1}\sum_{x\in M_{n|\phi|}}\xi(x)p_{n|\phi|/|f|}(f_x)
\end{align}
where $\xi$ is a fixed character $\in\phi$, $f_x$ is the orbit generated by $x$, $p_r(f)=0$ unless $r$ is a nonnegative integer. In particular $f=f_1=t-1$,
\begin{align}\label{linear2:f=1}
    p_n(f_1)=\dfrac{(-1)^{n-1}}{q^n-1}\sum_{\phi\in\Phi_n^*}|\phi|p_{n/|\phi|}(\phi).
\end{align}

\begin{prop}\cite[Prop. 3.2]{JW}
    For any two polynomials $f_1,f_2\in\Phi$, we have
    \begin{align}
        \langle p_n(f_1),p_m(f_2) \rangle=\dfrac{n}{q^{n|f|}-1}\delta_{n,m}\delta_{f_1,f_2}
    \end{align}
Moreover, for any two partitions $\la,\mu$
    \begin{align}
         \langle p_{\la}(f),p_{\mu}(f')\rangle=z_{\la}(q^{-|f|})q^{-|\la||f|}\delta_{\la\mu}\delta_{f,f'}
    \end{align}
\end{prop}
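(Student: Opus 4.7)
The first identity is essentially Pontryagin duality on the cyclic group $M_N$, where $N := n|f_1|$; the second will follow from the first by the multiplicativity of the Heisenberg pairing on monomial power sums. For the first, I would substitute the transform \eqref{linear1} into both arguments of $\langle p_n(f_1), p_m(f_2)\rangle$ and invoke the known orthogonality $\langle p_a(\phi), p_b(\phi')\rangle = a\,\delta_{ab}\delta_{\phi\phi'}$. This forces the constraints $N := n|f_1| = m|f_2|$ and $\phi_{\xi_1} = \phi_{\xi_2}$, reducing the computation to a double character sum
\[
\frac{1}{(q^N-1)^2}\sum_{\phi \subset M_N^*} \frac{N}{|\phi|} \Bigl(\sum_{\xi \in \phi} \overline{\xi(x_1)}\Bigr)\Bigl(\sum_{\xi \in \phi} \xi(x_2)\Bigr)
\]
over Frobenius orbits $\phi$ of characters (up to the overall sign $(-1)^{N-1}(-1)^{N-1}=1$).

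\textbf{Key step.} Using the orbit-to-tower identity
\[
\sum_{\xi' \in \phi_\xi} \xi'(y) = \frac{|\phi_\xi|}{N}\sum_{j=0}^{N-1} \xi(y^{q^j}),
\]
the factor $|\phi|$ cancels and the double sum reorganizes as
\[
\frac{1}{(q^N-1)^2}\sum_{j=0}^{N-1}\sum_{\xi \in M_N^*} \overline{\xi(x_1)}\,\xi(x_2^{q^j}) = \frac{1}{q^N-1}\,\#\{\,j \in \{0,\dots,N-1\}: x_1 = x_2^{q^j}\,\}
\]
by character orthogonality $\sum_\xi \overline{\xi(a)}\xi(b) = (q^N-1)\delta_{a,b}$ for the cyclic group $M_N$. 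This count is nonzero iff $x_1, x_2$ lie in a common Frobenius orbit, i.e.\ $f_1 = f_2$; in that case the $q^j$-shifts of $x_2$ cycle through the orbit of size $|f_1|$ exactly $N/|f_1| = n$ times, producing $\frac{n}{q^{n|f_1|}-1}\delta_{n,m}\delta_{f_1,f_2}$.

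\textbf{Second identity.} The $p_n(f)$ commute on $\mathcal F_G$ (they are polynomial combinations of the commuting $p_r(\phi)$), so the Heisenberg pairing is multiplicative on monomial products. The standard Wick-type computation for $\lambda = (1^{m_1}2^{m_2}\cdots)$ yields
\[
\langle p_\lambda(f), p_\mu(f')\rangle = \delta_{\lambda\mu}\delta_{ff'}\,\prod_{i\geq 1} m_i!\Bigl(\frac{i}{q^{i|f|}-1}\Bigr)^{m_i},
\]
and a direct expansion $z_\lambda(q^{-|f|})\,q^{-|\lambda||f|} = z_\lambda \prod_i(1-q^{-i|f|})^{-m_i}\,q^{-|\lambda||f|}$ matches this expression exactly.

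\textbf{Main obstacle.} The delicate part is the bookkeeping in the character-sum step: three combinatorial factors must cancel simultaneously—the $N/|\phi|$ from the Heisenberg orthogonality on the $p_*(\phi)$, the $1/|\phi|$ appearing in the orbit decomposition, and the over-counting $N/|\phi|$ when extending the orbit sum to a full tower sum over $j \in \{0,\dots,N-1\}$. A related subtlety is the placement of complex conjugates: the conventions on the $\mathcal F_G$ inner product must arrange that $\sum_\xi \overline{\xi(a)}\xi(b)$ (yielding $\delta_{a,b}$) appears rather than $\sum_\xi\overline{\xi(a)}\,\overline{\xi(b)}$ (which would yield $\delta_{a,b^{-1}}$ and a reciprocal-polynomial delta); this is precisely what the conjugate in \eqref{linear1} is designed to ensure.
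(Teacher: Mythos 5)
The paper gives no proof of this proposition: it is quoted verbatim from \cite[Prop.~3.2]{JW}, so there is no internal argument to compare yours against. Your reconstruction is nevertheless correct and is the natural one: substitute \eqref{linear1} into both slots, use $\langle p_a(\phi),p_b(\phi')\rangle=a\,\delta_{ab}\delta_{\phi\phi'}$ to force $N=n|f_1|=m|f_2|$ and $\phi_{\xi_1}=\phi_{\xi_2}$, convert the orbit sums to a full sum over $M_N^*$ via the tower identity, and apply orthogonality for the cyclic group $M_N$ of order $q^N-1$; the count of $j$ with $x_1=x_2^{q^j}$ is exactly $N/|f_1|=n$ when $f_1=f_2$, and the three factors of $N/|\phi|$, $|\phi|/N$ that you track do cancel as claimed. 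The second identity follows as you say, since $z_\lambda(q^{-|f|})q^{-|\lambda||f|}=z_\lambda\prod_j(q^{\lambda_j|f|}-1)^{-1}$ matches the Wick factorization. The one point worth making explicit rather than merely flagging is the conjugation convention: if the Fock-space pairing were read as strictly bilinear, substituting \eqref{linear1} into both arguments would produce $\overline{\xi_1(x_1)}\,\overline{\xi_2(x_2)}$ and hence $\delta_{x_1,x_2^{-q^j}}$, i.e.\ $f_1$ equal to the \emph{conjugate} polynomial of $f_2$; one needs the pairing to be hermitian in the second slot (consistent with $(f,g)=\frac1{|G_n|}\sum f(x)g(x^{-1})$ on characters) to land on $\delta_{f_1,f_2}$. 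You identify this correctly, but in a written-out proof it should be resolved, not just noted.
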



For each $\phi\in\Phi^*$ since $p_n(\phi)$ is a linear combination of $p_n(f),f\in\Phi$, $\mathcal F_G$ can be regarded as the space
\begin{align}\label{e:FG}
\mathcal{F}_G=\bigotimes_{f\in \Phi}\mathbb{C}[p_n(f):n\geqslant1]=\bigotimes_{f\in \Phi}\Lambda_{\mathbb C}(f),
\end{align}
where $\Lambda_{\mathbb C}(f)=\mathbb C[p_1(f), p_2(f), \ldots]\cong \Lambda_{\mathbb C}$ is the space of symmetric functions generated by the $p_n(f)$.

The Fock space $\mathcal F_G$ is also the unique level one irreducible representation of the infinite dimensional Heisenberg Lie algebra $\widehat{\mathfrak{h}}_{\overline{\mathbb F}_q}$ generated by the $p_n(f)$ and the central element $C$ with
the relation
 \begin{align}\label{e:Heisenberg2}
[p_m(f), p_n(f')]=\frac{m}{q^{m|f|}-1}\delta_{m, -n}\delta_{f, f'}C. \qquad\quad (m>0)
\end{align}
For each $f\in\Phi$, the action $p_n(f):\mathcal{F}_G\to\mathcal{F}_G$ is the multiplicative operator of degree $n|f|$, and the adjoint operator is the differential operator $p_n^*(f)=\dfrac{n}{q^{n|f|}-1}\dfrac{\partial}{\partial p_n(f)}$ of degree $-n|f|$. Note that $*$ is $\mathbb C$-linear and anti-involutive, for $u,v\in\mathcal{F}_G$, we have
\begin{align}
  \langle p_n(f)u,v\rangle=\langle u,p_n^*(f)v \rangle.
\end{align}

For each $f\in\Phi$, we define the vertex operator $Q(f;z)$ and $Q^*(f;z)$ as the following product of exponential operators: $\mathcal F_G\to\mathcal F_G[[z,z^{-1}]]$\cite{Jing2,JW}:
\begin{align}\label{O:Q}
    Q(f; z)&=\mbox{exp} \left( \sum\limits_{n\geq 1} \dfrac{q^{n|f|}-1}{n}p_n(f)z^{n} \right) \mbox{exp} \left( -\sum \limits_{n\geq 1} q^{-n|f|}\frac{\partial}{\partial p_n(f)}z^{-n} \right)=\sum_{n\in\mathbb Z}Q_n(f)z^{n},\\\label{O:dual Q}
    Q^*(f;z)&=\mbox{exp}\left( -\sum\limits_{n\geq 1} \dfrac{1-q^{-n|f|}}{n}p_n(f)z^{n} \right) \mbox{exp} \left( \sum \limits_{n\geq 1} \frac{\partial}{\partial p_n(f)}z^{-n} \right)=\sum_{n\in\mathbb Z}Q_n^*(f)z^{-n}.
\end{align}
The operators $Q_n(f),Q_n^*(f)\in\mathrm{End}(\mathcal F_G)$ are vertex operators realizing Hall-Littlewood functions defined first in \cite{Jing2}, where it was shown that $q^{-|\la||f|}Q_{\bmu(f)}.1=q^{-|\la||f|}Q_{\bmu(f)_1}\ldots Q_{\bmu(f)_l}.1=Q_{\bmu(f)}(f)$ in the $p_n(f)$, and $Q_{-n}(f).1=\delta_{n,0}$, $Q_n^*(f).1=\delta_{n,0}$ for $n\geq0$.


\begin{prop}\cite{JW} The irreducible characters of $G_n$ are parameterized by $\Phi^*$-colored partitions $\tbl\in\mathcal P_n^{\Phi^*}$.The value of the irreducible character $\chi^{\tbl}$ at an unipotent conjugacy class of type $\mu(f_1)$ is simplified to
\begin{align}\label{ful:char}
\chi^{\tilde{\bl}}_{\mu}&=q^{n(\mu)}\left\langle \prod_{\phi\in\Phi^*}S_{\tbl(\phi)}.1, Q_{\mu(f_1)}(f_1).1\right\rangle.
\end{align}
\end{prop}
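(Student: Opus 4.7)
The formula is the unipotent specialization of the general matrix-coefficient presentation of character values of $\GL_n(\f_q)$ proved in \cite{JW}. My plan is to (i) recall the general formula for arbitrary conjugacy classes $\bmu \in \mathcal{P}_n^{\Phi}$, then (ii) collapse it in the unipotent case $\bmu = \mu(f_1)$.

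Step (i) rests on Green's Frobenius-type characteristic map, which identifies the graded algebra $R_G$ with the Fock space $\mathcal F_G$ in such a way that the irreducible character $\chi^{\tbl}$ is sent to $\prod_\phi s_{\tbl(\phi)}(\phi) = \prod_\phi S_{\tbl(\phi)}.1$ (using the vertex operator realization of Schur functions recorded after \eqref{O:S}), and the indicator function of the conjugacy class of type $\bmu$ is sent, up to an explicit $q$-power, to $\prod_f Q_{\bmu(f)}(f).1$ (using the Hall-Littlewood vertex operator \eqref{O:Q}). Taking the inner product of these two images recovers the character value and leads to the general formula
\[
\chi^{\tbl}_{\bmu} = q^{n(\bmu)} \left\langle \prod_{\phi\in\Phi^*} S_{\tbl(\phi)}.1,\; \prod_{f\in\Phi} Q_{\bmu(f)}(f).1 \right\rangle, \qquad n(\bmu) = \sum_{f\in\Phi} |f|\, n(\bmu(f)).
\]
The $q^{n(\bmu)}$ prefactor accounts for the discrepancy between the Heisenberg inner product \eqref{e:Heisenberg2} on $\mathcal F_G$ and the classical Hall inner product used in Green's characteristic map.

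Step (ii) is then immediate: for a unipotent class, $\bmu(f)=0$ for $f\neq f_1$ and $\bmu(f_1)=\mu$. Using the tensor factorization $\mathcal F_G = \bigotimes_f \Lambda_{\mathbb C}(f)$ together with $Q_0(f).1 = 1$, the product over $f \in \Phi$ collapses to the single factor $Q_{\mu(f_1)}(f_1).1$; meanwhile $n(\bmu)$ reduces to $n(\mu)\cdot|f_1|=n(\mu)$ because $f_1=t-1$ has degree one. This yields the claimed identity \eqref{ful:char}. The principal obstacle in this plan is not the specialization itself but the foundational step (i): one must carefully track the normalization constants in translating Green's formula through the vertex operator realization, since two distinct $q$-scalings are in play (one from the definition of $Q(f;z)$ in \eqref{O:Q} and one from the pairing \eqref{e:Heisenberg2}) and they must combine to produce exactly $q^{n(\bmu)}$. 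Once this bookkeeping is settled at the level of general $\bmu$, the unipotent reduction is a purely formal collapse.
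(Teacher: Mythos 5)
Your proposal matches the paper's treatment: the proposition is simply quoted from \cite{JW}, where the general matrix-coefficient formula $\chi^{\tbl}_{\bmu}=q^{n(\bmu)}\bigl\langle \prod_{\phi\in\Phi^*}S_{\tbl(\phi)}.1,\ \prod_{f\in\Phi}Q_{\bmu(f)}(f).1\bigr\rangle$ is established, and the displayed identity is just its specialization to classes with $\bmu(f)=\emptyset$ for $f\neq f_1$, exactly as in your step (ii). Since the paper offers no independent proof here, your reduction (empty factors contribute $1$, and $n(\bmu)=|f_1|\,n(\mu)=n(\mu)$ because $\deg f_1=1$) is the same argument, with the normalization bookkeeping you flag in step (i) being precisely what is carried out in \cite{JW}.
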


\section{The Murnaghan-Nakayama rule }


We introduce  a new family of symmetric functions $q_n(Y_f;q^{|f|}),f\in\Phi$ defined by 
\begin{equation}\label{O:q}
\sum_{r\geq0}q_r(Y_f;q^{|f|})z^n=\exp\left({-\sum_{n\geq1}\dfrac{1-q^{-n|f|}}{n}p_n(f)z^n}\right)
\end{equation}
It follows from the action of \eqref{O:dual Q}
that for $n\geq 0$
\begin{align}\label{q}
   q_n(Y_f;q^{|f|})=Q_{-n}^*(f).1=q^{-n|f|}\sum_{\la\vdash n}\dfrac{p_{\la}(f)}{z_{\la}(q^{|f|})}.
\end{align}
Thus $Q^*_{-n}(f).1$ is just the Hall-Littlewood function associated to one row partition $(n)$ up to a factor. 

\begin{thm}\label{thm:q-S}
The function $q_n(Y_{f_1};q)$ can be expressed in $p_{\tbl}=\prod\limits_{\phi\in\Phi^*}p_{\tbl(\phi)}(\phi)$ as follows.
 \begin{align}
        q_n(Y_{f_1};q)=\left(\frac{-1}{q}\right)^{n}\sum_{\tbl}\dfrac{1}{Z_{\tbl}}p_{\tbl}
\end{align}
    summed over all $\tbl\in\mathcal P_n^{\Phi^*}$. Moreover, $q_n(Y_{f_1},q)$ is expanded in $\prod\limits_{\phi\in\Phi}s_{\tbl(\phi)}(\phi)$ by
\begin{align}
        q_n(Y_{f_1};q)=\left(\frac{-1}{q}\right)^{n}\sum_{\tbl}\prod_{\phi\in\Phi^*}s_{\tbl(\phi)}(\phi)
\end{align}
   summed over all $\tbl\in\mathcal P_n^{\Phi^*}$ with $l(\tbl(\phi))=1$ for each $\Phi\in\Phi^*$.
\end{thm}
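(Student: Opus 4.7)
The plan is to compute the generating function $\sum_{r\geq 0}q_r(Y_{f_1};q)z^r$ directly from \eqref{O:q}, push the base-change identity \eqref{linear2:f=1} through the exponent, and then recognize the result as a product over $\phi\in\Phi^*$ of the familiar exponential generating function for power sums. Both claims will then follow by applying two standard symmetric-function identities inside each $\phi$-factor.

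First, I would rewrite the exponent in \eqref{O:q} with $f=f_1$. Since $|f_1|=1$ and $(1-q^{-n})/(q^n-1)=q^{-n}$, substituting \eqref{linear2:f=1} and collecting the sign yields
\begin{equation*}
-\sum_{n\geq 1}\frac{1-q^{-n}}{n}p_n(f_1)z^n \;=\; \sum_{n\geq 1}\frac{(-1)^n}{n\,q^n}\sum_{\phi\in\Phi_n^*}|\phi|\,p_{n/|\phi|}(\phi)\,z^n.
\end{equation*}
Setting $w_\phi=(-z/q)^{|\phi|}$ and reindexing via $n=m|\phi|$ (so the condition $\phi\in\Phi_n^*$ becomes $|\phi|\mid n$), the factor $|\phi|$ cancels the denominator $m|\phi|$ and the whole exponential factorizes as
\begin{equation*}
\sum_{r\geq 0}q_r(Y_{f_1};q)z^r \;=\; \prod_{\phi\in\Phi^*}\exp\Bigl(\sum_{m\geq 1}\tfrac{p_m(\phi)}{m}w_\phi^m\Bigr).
\end{equation*}

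For the first assertion I would invoke the classical identity $\exp(\sum_m \frac{p_m}{m}w^m)=\sum_{\lambda}\frac{p_\lambda}{z_\lambda}w^{|\lambda|}$ inside each $\phi$-factor. Expanding the resulting product over $\phi$ produces a sum over $\Phi^*$-colored partitions $\tbl$, with weight $\prod_\phi \frac{p_{\tbl(\phi)}(\phi)}{z_{\tbl(\phi)}}=\frac{p_{\tbl}}{Z_{\tbl}}$ and monomial $\prod_\phi w_\phi^{|\tbl(\phi)|}=(-z/q)^{\|\tbl\|}$; extracting the coefficient of $z^n$ restricts the sum to $\tbl\in\mathcal P_n^{\Phi^*}$ and gives the stated formula. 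For the second assertion I would instead apply the dual identity $\exp(\sum_m \frac{p_m}{m}w^m)=\sum_{k\geq 0}h_kw^k=\sum_{k\geq 0}s_{(k)}w^k$ factor-by-factor, so each $\phi$ contributes at most a single one-row Schur function; only colored partitions $\tbl$ with $l(\tbl(\phi))\leq 1$ for every $\phi$ survive, matching the indexing condition in the theorem (with the convention that $s_{\emptyset}=1$).

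The main obstacle is purely the sign and $q$-power bookkeeping: one must verify carefully that $(-1)^{m|\phi|}z^{m|\phi|}q^{-m|\phi|}=w_\phi^m$ and that the joint weight $\sum_\phi|\phi|\,|\tbl(\phi)|=\|\tbl\|=n$ produces the uniform overall scalar $(-1/q)^n$. Once these identifications are in place, both expansions are immediate corollaries of the two classical exponential identities for symmetric functions applied within each $\phi$-factor.
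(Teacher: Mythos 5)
Your proposal is correct and follows essentially the same route as the paper: substitute \eqref{linear2:f=1} into the exponent of \eqref{O:q}, reindex over $n=m|\phi|$ so the generating function factorizes as $\prod_{\phi}\exp\bigl(\sum_m \tfrac{1}{m}p_m(\phi)(-z/q)^{m|\phi|}\bigr)$, and then read off the two expansions from the classical identities for $\sum_\lambda p_\lambda/z_\lambda$ and $\sum_k s_{(k)}$. The paper phrases the second expansion via the vertex-operator identity $Q^*(f_1;z).1=\prod_{\phi}S(\phi;(-z/q)^{|\phi|}).1$, which is the same computation in different notation.
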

\begin{proof}
    Notice that $p_{n/d(\phi)}=0$ if $d(\phi)$ is not a divisor of $n$. By definitions \eqref{O:dual Q}\eqref{q} and relation \eqref{linear2:f=1}, we have
    \begin{align*}
\sum_{n\in\mathbb Z}(Q_{-n}^*(f_1).1)z^{-n}
&=\exp\left(\sum_{n\geq1}\dfrac{(-1)^n}{nq^n}\sum_{\phi\in\Phi^*_n}|\phi|p_{n/|\phi|}(\phi) z^n\right)\\
&=\prod_{\phi\in\Phi^*}\exp\left(\sum_{n\geq1}\dfrac{(-1)^{n|\phi|}}{nq^{n|\phi|}}p_n(\phi)z^{n|\phi|} \right)\\
&=\prod_{\phi\in\Phi^*}\sum_{\la}\dfrac{1}{z_\lambda}p_{\la}(\phi)\left(\dfrac{z}{-q}\right)^{|\la||\phi|}.
    \end{align*}
which proves the first statement. For the second equation, it is clear by using the relation
    \begin{align}
        Q^*(f_1;z).1=\prod_{\phi\in\Phi^*}S(\phi;(-z/q)^{|\phi|}).1
    \end{align}
Then taking the coefficient of $z^n$ of both sides gives the result.
\end{proof}

For a given $\phi\in\Phi^*$  we simply write $S_n$ for $S_n(\phi)$ and $Q_n$ for $Q_n(f_1)$ unless otherwise noted. We first recall the normal ordering operator: 
\begin{align*}
:Q^*(f;z)S(\phi;w):&=\mbox{exp}\left(\sum\limits_{n\geq 1} \dfrac{q^{-n|f|}-1}{n}p_n(f)z^{n} +\dfrac{1}{n}p_n(\phi)w^{n}\right)\mbox{exp}\left( \sum \limits_{n\geq 1} \frac{\partial}{\partial p_n(f)}z^{-n}-\frac{\partial}{\partial p_n(\phi)}w^{-n}\right).
\end{align*}

\begin{prop}\label{rel:S-H}
    The commutation relations between the vertex operators realizing Hall-Littlewood functions colored by $f_1$ and Schur functions colored by $\phi\in\Phi^*$ are
    \begin{align}\label{relation: dualQ-S}
    Q_n^*S_m=\sum_{i=1}^\infty A(q^{|\phi|};i)(-1)^{i|\phi|}S_{m-i}Q_{n-i|\phi|}^*
    \end{align}
where $A(q^{|\phi|};i)=\begin{cases}
    q^{i|\phi|}(q^{-|\phi|}-1),&i>1,\\
    -q^{|\phi|}.&i=1.
\end{cases}$
\end{prop}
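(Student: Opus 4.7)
The proof proceeds by computing both $Q^*(f_1;z)S(\phi;w)$ and $S(\phi;w)Q^*(f_1;z)$ in normal-ordered form and then equating coefficients of $z^{-n}w^m$. The plan is to use the standard vertex operator trick: each operator factors into a creation part and an annihilation part, so the product equals $:Q^*(f_1;z)S(\phi;w):$ times a scalar ``contraction factor'' obtained by commuting the annihilation part of the left factor past the creation part of the right factor via $e^A e^B = e^{[A,B]}e^B e^A$.

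The key input is the action of $\partial/\partial p_n(f_1)$ on $p_m(\phi)$. Because the derivative is taken in the $\{p_k(f)\}_{f\in\Phi}$ basis, I first use \eqref{linear2} to expand $p_m(\phi)$ in that basis; the only contribution to the coefficient of $p_n(f_1)$ comes from the unique element $x=1\in M_{m|\phi|}$ whose $F$-orbit is $f_1$, yielding $[\partial/\partial p_n(f_1),\,p_m(\phi)] = (-1)^{m|\phi|-1}\delta_{n,m|\phi|}$. Symmetrically, \eqref{linear2:f=1} gives the coefficient of $p_m(\phi)$ in $p_n(f_1)$; after simplifying using $(1-q^{-k})/(q^k-1)=q^{-k}$, exponentiating the two resulting logarithmic series produces
\begin{align*}
Q^*(f_1;z)S(\phi;w) &= \bigl(1-(-1)^{|\phi|}wz^{-|\phi|}\bigr):Q^*(f_1;z)S(\phi;w):,\\
S(\phi;w)Q^*(f_1;z) &= \bigl(1-(-1)^{|\phi|}q^{-|\phi|}z^{|\phi|}w^{-1}\bigr):Q^*(f_1;z)S(\phi;w):.
\end{align*}

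Eliminating the common normal-ordered piece gives $Q^*(f_1;z)S(\phi;w)=R(z,w)\,S(\phi;w)Q^*(f_1;z)$ with $R$ the ratio of the two scalars. The remaining step is to expand $R$ as a formal power series in $u=wz^{-|\phi|}$. Rewriting $1-(-1)^{|\phi|}q^{-|\phi|}u^{-1}=-(-1)^{|\phi|}q^{-|\phi|}u^{-1}(1-(-q)^{|\phi|}u)$ and geometric-expanding in nonnegative powers of $u$, one obtains
\begin{equation*}
R(z,w) = -(-1)^{|\phi|}q^{|\phi|}u + \sum_{i\geq 2}(-1)^{i|\phi|}q^{i|\phi|}\bigl(q^{-|\phi|}-1\bigr)u^i,
\end{equation*}
whose $i$-th coefficient is exactly $A(q^{|\phi|};i)(-1)^{i|\phi|}$. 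Extracting the coefficient of $z^{-n}w^m$ on both sides of the equality then produces the claimed operator identity. The main obstacles are minor: tracking signs in the two contractions (controlled by the factors $(-1)^{m|\phi|-1}$ coming from \eqref{linear2} and \eqref{linear2:f=1}) and choosing the correct expansion region for the rational function $R$; the choice of expanding in nonnegative powers of $u$ is forced by the shape of the target formula, whose sum runs only over $i\geq 1$.
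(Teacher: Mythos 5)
Your proposal is correct and follows essentially the same route as the paper's proof: normal-ordering both products, computing the two scalar contraction factors $1-\tfrac{w}{(-z)^{|\phi|}}$ and $1-\tfrac{(-z)^{|\phi|}}{wq^{|\phi|}}$ (which your expressions agree with), and expanding their ratio in nonnegative powers of $wz^{-|\phi|}$ before extracting the coefficient of $z^{-n}w^m$. The only difference is presentational — you justify the contraction explicitly via \eqref{linear2} and \eqref{linear2:f=1}, where the paper simply states the resulting bracket.
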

\begin{proof}
By relations \eqref{linear2}\eqref{e:Heisenberg2}, we compute the brackets by
\begin{align*}
 &\exp\left\{\left[\sum_{n\geq1}\frac{\partial}{\partial p_n(f_1)}z^{-n},\sum_{n\geq1}\frac{1}{n}p_n(\phi)w^n\right] \right\}=\exp\left\{\sum_{n\geq1}-\frac{1}{n}\left(\dfrac{w}{(-z)^{|\phi|}}\right)^n  \right\}
=1-\dfrac{w}{(-z)^{|\phi|}}.
\end{align*}
    Using usual technique of vertex operators, we have
\begin{align*}
    Q^*(f_1;z)S(\phi;w)&=:Q^*(f_1;z)S(\phi;w):\left(1-\dfrac{w}{(-z)^{|\phi|}}\right)\\
    S(\phi;w)Q^*(f_1;z)&=:Q^*(f_1;z)S(\phi;w):\left(1-\dfrac{(-z)^{|\phi|}}{wq^{|\phi|}}\right)
\end{align*}Hence we obtain the equation
\begin{align}\label{Q*-S}
    Q^*(f_1;z)S(\phi;w)\left( 1-\dfrac{(-z)^{|\phi|}}{wq^{|\phi|}}\right)&=S(\phi;w)Q^*(f_1;z)\left(1-\dfrac{w}{(-z)^{|\phi|}}\right).
\end{align}
Note that the polynomial
\begin{align*}
(1-\dfrac{w}{(-z)^{|\phi|}})(1-\dfrac{(-z)^{|\phi|}}{wq^{|\phi|}})^{-1}=\left(\dfrac{w}{(-z)^{|\phi|}}-1\right)\sum_{i=1}^{\infty}\dfrac{w^iq^{i|\phi|}}{(-z)^{i|\phi|}}.
\end{align*}
Then taking the coefficient of $z^{-n}w^m$ of both sides gives the result.
\end{proof}

Following the \cite{Jing1,Jing3}, it is known that for integers $m,n$
\begin{align}\label{rel:Schur}
        S_m(\phi)S_n(\phi)+S_{n-1}(\phi)S_{m+1}(\phi)=0.
    \end{align}
For any two $\phi,\psi\in\Phi^*$, the operators $S_n(\phi)$ and $S_m(\psi)$ commute with each other.

There appear infinitely many terms in the expansion of $Q_n^*S_m$. Using the relations of $Q_{n-(m+1)|\phi|}^*S_{-1}$ we can convert $Q_n^*S_m$ to a finite sum. Namely,
\begin{align}\label{infinite rel:Q-S}
    Q_n^*S_m=&\sum_{i=1}^{m+1}A(q^{|\phi|};i)(-1)^{i|\phi|}S_{m-i}Q_{n-i|\phi|}^*+(-1)^{(m+2)|\phi|}q^{(m+1)|\phi|}S_{-2}Q_{n-(m+2)|\phi|}^*\\\notag
    &+(-1)^{(m+1)|\phi|}q^{(m+1)|\phi|}Q_{n-(m+1)|\phi|}^*S_{-1}
\end{align}
We remark that $S_{-2}Q_{n-(m+2)|\phi|}^*.1=0$ by the relation \eqref{rel:Schur} and Theorem \ref{thm:q-S}, then
\begin{align}
    Q_n^*S_m.1=\sum_{i=1}^{m+1}A(q^{|\phi|};i)(-1)^{i|\phi|}S_{m-i}Q_{n-i|\phi|}^*.1
\end{align}


For a partition $\la=(\la_1,\ldots,\la_l)$, we define the following partitions associated with $\la$:
\begin{align}
    \la^{[i]}&=(\la_{i+1},\ldots,\la_l),\qquad i=0,1,\ldots,l\\
    \underline{\la}&=\la+\delta+(1^l), \quad\mbox{where}\;\; \delta=(l-1,l-2,\ldots,1,0)
\end{align}

The following gives a key step for the Murnaghan-Nakayama rule for character values.
\begin{thm}\label{MN}
    For a partition $\la=(\la_1,\ldots,\la_l)$ and a positive integer $k$, we have
    \begin{align}\label{finiite rel:Q-S}
Q_k^*S_{\la}.1=\sum_{\tau\subset\underline{\la}}\prod_{i=1}^lA(q^d;\tau_i)(-1)^{|\tau|d}S_{\la-\tau}Q_{k-|\tau|d}^*.1
    \end{align}
summed over all  compositions  $\tau\subset \underline{\la}$ such that  $\tau_i>0$ for $1\leq i\leq l$.
\end{thm}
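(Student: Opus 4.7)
The plan is to iterate the commutation identity \eqref{relation: dualQ-S} of Proposition \ref{rel:S-H} and push $Q_k^*$ past each $S_{\la_i}$ in succession. After $l$ applications this yields
\begin{align*}
Q_k^* S_\la.1 = \sum_{\substack{\tau_i \geq 1 \\ 1 \leq i \leq l}} \prod_{i=1}^l A(q^d; \tau_i)(-1)^{|\tau|d}\, S_{\la_1 - \tau_1}(\phi) \cdots S_{\la_l - \tau_l}(\phi)\, Q^*_{k - |\tau|d}(f_1).1,
\end{align*}
where the sum ranges a priori over all compositions $\tau$ with positive parts. The remaining task is to truncate this sum to $\tau \subset \underline{\la}$ by showing that every term in which $\tau_i > \underline{\la}_i$ for some $i$ vanishes on the vacuum.

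The vanishing will combine Theorem \ref{thm:q-S} with the Schur anti-commutation \eqref{rel:Schur}. By Theorem \ref{thm:q-S}, $Q^*_n(f_1).1 = 0$ for $n > 0$, and for $n \leq 0$ it is a $\mathbb{C}$-linear combination of products $\prod_{\phi' \in \Phi^*} S_{m_{\phi'}}(\phi').1$ with each $m_{\phi'} \geq 0$. Since $S_r(\phi')$ commutes with $S_s(\phi)$ whenever $\phi' \neq \phi$, the vanishing reduces to showing that the $(l+1)$-factor product
\begin{align*}
S_{\la_1 - \tau_1}(\phi)\, S_{\la_2 - \tau_2}(\phi) \cdots S_{\la_l - \tau_l}(\phi)\, S_{m_\phi}(\phi).1
\end{align*}
vanishes in the tensor factor indexed by $\phi$ whenever $\tau_i > \underline{\la}_i$ for some $i \leq l$, uniformly in $m_\phi \geq 0$.

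To verify this, I would set $b_i := \underline{\la}_i - \tau_i$ for $1 \leq i \leq l$ and $b_{l+1} := m_\phi$. These are precisely the shifted indices $b_i = a_i + (l+1-i)$ for the $(l+1)$-factor Schur product, with $a_i = \la_i - \tau_i$ for $i \leq l$ and $a_{l+1} = m_\phi$. The relation $S_aS_b + S_{b-1}S_{a+1} = 0$ acts on the $b$-coordinates as a signed transposition, so sorting the $b_i$ strictly decreasingly rewrites the product, up to sign, as the Schur function $s_\mu(\phi)$ with $\mu_i = b_{\sigma(i)} - (l+1-i)$; this is nonzero iff the $b_i$ are pairwise distinct and all $\geq 0$. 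Any violation $\tau_i > \underline{\la}_i$ forces $b_i \leq -1$, and after sorting this negative entry lands at position $l+1$, producing $S_{b_{\min}}(\phi).1 = 0$.

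The main obstacle will be the length bookkeeping in the vanishing step: one must recognize that the Schur product acquires length $l+1$ (not $l$) once the extra factor $S_{m_\phi}(\phi)$ from the expansion of $Q^*(f_1).1$ is absorbed, and this is exactly what aligns the nonnegativity threshold $b_i \geq 0$ with the combinatorial condition $\tau \subset \underline{\la}$. Once termwise vanishing has been established uniformly in $m_\phi$ and across all $\Phi^*$-colored summands of $Q^*_{k - |\tau|d}(f_1).1$, the sum collapses to the finite one over $\tau \subset \underline{\la}$ stated in \eqref{finiite rel:Q-S}.
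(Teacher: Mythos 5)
Your proposal is correct and follows essentially the same route as the paper: both push $Q_k^*$ through the factors of $S_{\la}$ one at a time via Proposition \ref{rel:S-H} and then discard the out-of-range terms using the straightening relation \eqref{rel:Schur} together with the single-row expansion of $Q^*_{-n}(f_1).1$ from Theorem \ref{thm:q-S}. The only difference is organizational: the paper runs an induction on $l(\la)$ using the finitely truncated identity \eqref{infinite rel:Q-S} and kills two boundary terms at each step, whereas you expand fully and verify the single termwise vanishing criterion $b_i=\underline{\la}_i-\tau_i\geq 0$, which amounts to the same computation.
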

\begin{proof}
    We argue by induction on $l(\la)$. The initial step is clear. We assume that \eqref{finiite rel:Q-S} holds for any partition with length $<l(\la)$. By relation \eqref{rel:Schur} it follows from the recursion formula \eqref{infinite rel:Q-S} that
    \begin{align*}
    Q_k^*S_{\la}.1=&\sum_{i=1}^{\la_1+l}A(q^{|\phi|};i)(-1)^{i{|\phi|}}S_{\la_1-i}Q^*_{k-i|\phi|}S_{\la^{[1]}}.1+(-1)^{(\la_1+l+1)|\phi|}q^{(\la_1+l)|\phi|}S_{-l-1}Q^*_{k-(\la_1+l+1)|\phi|}S_{\la^{[1]}}.1\\
    &+(-1)^{(\la_1+l)|\phi|}q^{(\la_1+l)|\phi|}Q_{k-(\la_1+l)|\phi|}S_{-l}S_{\la^{[1]}}.1\\
    =&\sum_{i=1}^{\la_1+l}A(q^{|\phi|};i)(-1)^{i|\phi|}S_{\la_1-i}\sum_{\tau\subset\underline{\la^{[1]}}}\prod_{j=1}^{l-1}A(q^{|\phi|};\tau_j)(-1)^{|\tau||\phi|}S_{\la^{[1]}-\tau}Q^*_{k-i|\phi|-|\tau||\phi|}.1\\
=&\sum_{\tau\subset\underline{\la}}\prod_{i=1}^lA(q^{|\phi|};\tau_i)(-1)^{|\tau||\phi|}S_{\la-\tau}Q^*_{k-|\tau||\phi|}.1
    \end{align*}
\end{proof}

Next we consider the action of $Q_k^*$ on $\prod\limits_{\phi\in\Phi^*}S_{\tbl(\phi)}.1$ for $\tbl\in\mathcal{P}^{\Phi^*}$. From Theorem \ref{MN} we know
\begin{align*}
    Q_k^*S_{\la}(\phi)=\sum_{\tau\subset\underline{\la}}\prod_{i=1}^{l(\la)}A(q^{|\phi|};\tau_i)(-1)^{|\tau||\phi|}S_{\la-\tau}Q_{k-|\tau||\phi|}^*+\cdots
\end{align*}
Note that $S_m(\phi)S_n(\psi)=S_n(\psi)S_m(\phi)$, so $Q^*_{k-(\la_1+l)|\phi|}S_{-l}(\phi)\ldots S_{\la_l}(\phi)S_{\tbl(\psi)}.1=0$, and similarly for other omitted terms. Then we conclude that for any two orbits $\phi,\psi\in\Phi^*$
\begin{align*}
Q^*_kS_{\tbl(\phi)}S_{\tbl(\psi)}.1=\sum_{\tau\subset\underline{\tbl(\phi)}}\prod_{i=1}^{l(\tbl(\phi))}A(q^{|\phi|};\tau_i)(-1)^{|\tau||\phi|}S_{\tbl(\phi)-\tau}(\phi)Q_{k-|\tau||\phi|}^*S_{\tbl(\psi)}.1
\end{align*}
Summarizing these we
have the following corollary immediately.

\begin{cor}\label{MMN}
    For a $\tbl\in\mathcal P^{\Phi^*}$ and a positive integer $k$, we have
\begin{align}\label{e:MN}
Q^*_k\prod_{\phi\in\Phi^*}S_{\tbl(\phi)}.1=\sum_{\tbtau\subset\underline{\tbl}}\mathcal A(q;\tbtau)(-1)^{\|\tbtau\|}\prod_{\phi\in\Phi^*}S_{\tbl(\phi)-\tbtau(\phi)}Q^*_{k-\|\tbtau\|}.1
\end{align}
summed over all $\tbtau\in\mathcal C^{\Phi^*}$ such that $\tbtau(\phi)_i>0, 1\leq i\leq l(\tbtau(\phi))$ for each $\phi\in\Phi^*$, and $\underline{\tbl}(\phi)$ is the  partition $\underline{\tbl(\phi)}$, and $\mathcal A(q;\tbtau)=\prod_{\phi\in\Phi^*}\prod_{i=1}^{l(\tbtau(\phi))}A(q^{|\phi|};\tbtau(\phi)_i)$.
\end{cor}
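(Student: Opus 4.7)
The plan is to induct on the cardinality $r$ of the support $\{\phi \in \Phi^* : \tbl(\phi) \neq \emptyset\}$. The case $r=0$ is trivial and $r=1$ is exactly Theorem \ref{MN}, so the real content lies in the inductive step, which must show that Theorem \ref{MN} commutes cleanly past a block of Schur operators colored by other orbits.

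Fix one orbit $\phi_1$ in the support and order the rest as $\phi_2,\ldots,\phi_r$. I would apply the argument of Theorem \ref{MN} to push $Q_k^*$ through $S_{\tbl(\phi_1)}(\phi_1)$, but acting on $\prod_{j\geq 2} S_{\tbl(\phi_j)}(\phi_j).1$ rather than on $.1$. Retracing the recursion \eqref{infinite rel:Q-S}, each step of the induction in $l(\tbl(\phi_1))$ produces the desired main sum plus two residual terms headed by negative-indexed operators $S_{-l}(\phi_1)$ and $S_{-l-1}(\phi_1)$. In the single-orbit setting these residues vanish on $.1$ via the Schur relation \eqref{rel:Schur}. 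The essential observation, already flagged just before the corollary, is that the spectator factors $S_{\tbl(\phi_j)}(\phi_j)$ with $\phi_j \neq \phi_1$ commute with these negative-indexed operators; hence I can slide each such operator rightward past all spectators until it rejoins the $\phi_1$-colored block, whereupon \eqref{rel:Schur} annihilates the whole expression on $.1$.

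After this cancellation I am left with $Q_k^* \prod_{\phi} S_{\tbl(\phi)}(\phi).1 = \sum_{\tau^{(1)} \subset \underline{\tbl(\phi_1)}} \prod_i A(q^{|\phi_1|}; \tau^{(1)}_i)(-1)^{|\tau^{(1)}||\phi_1|} S_{\tbl(\phi_1)-\tau^{(1)}}(\phi_1)\, Q^*_{k-|\tau^{(1)}||\phi_1|} \prod_{j\geq 2} S_{\tbl(\phi_j)}(\phi_j).1$. Applying the inductive hypothesis to the inner factor, which is supported on the $r-1$ orbits $\phi_2,\ldots,\phi_r$, supplies compositions $\tau^{(2)},\ldots,\tau^{(r)}$; packaging them into $\tbtau \in \mathcal C^{\Phi^*}$ via $\tbtau(\phi_i) = \tau^{(i)}$ delivers \eqref{e:MN}, since both $\mathcal A(q;\tbtau)$ and $(-1)^{\|\tbtau\|}$ factor across orbits by their definitions. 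The main obstacle is the vanishing of the $S_{-l}(\phi_1), S_{-l-1}(\phi_1)$ residues in the presence of spectator orbits, but this reduces cleanly to the cross-orbit commutativity $S_m(\phi_1)S_n(\phi_j) = S_n(\phi_j)S_m(\phi_1)$ together with \eqref{rel:Schur}, so no further ingredient beyond what is stated in the excerpt is needed.
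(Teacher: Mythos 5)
Your proposal is correct and follows essentially the same route as the paper: apply the operator-level version of Theorem \ref{MN} one orbit at a time, observing that the residual terms headed by negative-indexed $S_{-l}(\phi_1)$, $S_{-l-1}(\phi_1)$ still vanish in the presence of spectator factors because Schur operators of distinct colors commute and \eqref{rel:Schur} then annihilates them on the vacuum. The paper writes out the two-orbit case and declares the general case immediate, whereas you package the iteration as an explicit induction on the size of the support, but the content is identical.
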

Using \eqref{e:MN}, we get an iterative formula to compute the character $\chi^{\tilde{\bl}}_{\mu(f_1)}$.

\begin{prop}\label{single1}
(1) Let $\mu=(\mu_1,\ldots,\mu_r)\vdash n$ be a partition and $m$ be a positive integer such that $n=m|\phi|$ for some $\phi\in\Phi^*$, the irreducible character of $\chi^{(m)(\phi)}_{\mu(f_1)}$ is the coefficient of $z^mw_1^{\,u_1}\ldots w_r^{\mu_r}$ of the formal series
\begin{align}\label{series}
    q^{n(\mu)}\prod_{1\leq i<j\leq r}\dfrac{w_i-w_j}{w_i-q^{-1}w_j}\prod_{i=1}^r[1-(-w_i)^{|\phi|}z]
\end{align}

(2) Let $\la=(\la_1,\ldots,\la_l)\vdash m$ be a partition and $n$ be a positive integer such that $n=m|\phi|$ for some $\phi\in\Phi^*$, the irreducible character of $\chi^{\la(\phi)}_{(n)(f_1)}$ is the coefficient of $z^nw_1^{\la_1}\ldots w_l^{\la_l}$ of the formal series
    \begin{align}
        \prod_{1\leq i<j\leq l}\left(1-\dfrac{w_j}{w_i}\right)\prod_{i=1}^l[1-(-z)^{|\phi|}w_i].
    \end{align}
In particular if $|\phi|=1$, we have
\begin{align}
    \chi^{\la(\phi)}_{(n)(f_1)}=\begin{cases}
        (-1)^{m+n}, &\mbox{if $\la=(1^m)$}\\
        0. &\mbox{otherwise}
    \end{cases}
\end{align}
\end{prop}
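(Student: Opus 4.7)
The plan is to express both character values as coefficients of two-variable vertex-operator matrix coefficients and then to evaluate those matrix coefficients in closed product form by combining vertex operator contractions with Cauchy duality. Using \eqref{ful:char}, part~(1) amounts to
\[
\chi^{(m)(\phi)}_{\mu(f_1)}=q^{n(\mu)}\,[z^m w_1^{\mu_1}\cdots w_r^{\mu_r}]\,\bigl\langle S(\phi;z).1,\,Q(f_1;w_1)\cdots Q(f_1;w_r).1\bigr\rangle,
\]
and (since $n((n))=0$) part~(2) amounts to
\[
\chi^{\la(\phi)}_{(n)(f_1)}=[z^n w_1^{\la_1}\cdots w_l^{\la_l}]\,\bigl\langle S(\phi;w_1)\cdots S(\phi;w_l).1,\,Q(f_1;z).1\bigr\rangle.
\]
It then suffices to identify each bracketed series with the product appearing in the proposition.

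First I would strip off the self-contractions. A standard vertex-operator computation using \eqref{e:Heisenberg2} and the identity $e^Ae^B=e^Be^Ae^{[A,B]}$ yields
\[
Q(f_1;w_i)Q(f_1;w_j)=:Q(f_1;w_i)Q(f_1;w_j):\,\frac{1-w_j/w_i}{1-q^{-1}w_j/w_i},\quad S(\phi;w_i)S(\phi;w_j)=:S(\phi;w_i)S(\phi;w_j):\,(1-w_j/w_i),
\]
which iterate to produce the scalar factors $\prod_{i<j}(w_i-w_j)/(w_i-q^{-1}w_j)$ and $\prod_{i<j}(1-w_j/w_i)$ of the proposition. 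Since all annihilation modes kill the vacuum, the remaining normal-ordered operator applied to $1$ reduces to a pure creation exponential; for part~(1) one gets
\[
\prod_i Q(f_1;w_i).1=\prod_{i<j}\frac{w_i-w_j}{w_i-q^{-1}w_j}\cdot\exp\Bigl(\sum_{n\geq1}\tfrac{q^n-1}{n}p_n(f_1)\sum_i w_i^n\Bigr),
\]
and symmetrically $S(\phi;z).1=\exp(\sum_{n\geq1}\tfrac{1}{n}p_n(\phi)z^n)=\prod_j(1-X_{j,\phi}z)^{-1}$.

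The main obstacle is then to rewrite the $p_n(f_1)$-exponential in the $X_{k,\psi}$-variables via \eqref{linear2:f=1}. After substitution and reordering of summations, the sign identity $(-1)^{m|\psi|}w_i^{m|\psi|}=((-w_i)^{|\psi|})^m$ collapses the exponential into
\[
\exp\Bigl(\sum_n\tfrac{q^n-1}{n}p_n(f_1)\sum_i w_i^n\Bigr)=\prod_{\psi\in\Phi^*}\prod_{i,k}\bigl(1-X_{k,\psi}(-w_i)^{|\psi|}\bigr).
\]
Because $S(\phi;z).1$ is supported only in the $\phi$-sector of $\mathcal F_G=\bigotimes_\psi\Lambda_{\mathbb C}(\psi)$, every factor with $\psi\neq\phi$ contributes only its constant term~$1$ to the pairing, and the remaining calculation takes place in $\Lambda_{\mathbb C}(\phi)$. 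The dual Cauchy identity $\prod_{i,k}(1-X_{k,\phi}u_i)=\sum_\la(-1)^{|\la|}s_\la(X_{\cdot,\phi})s_{\la'}(u)$ combined with $\langle s_{(m)},s_\la\rangle=\delta_{(m),\la}$ then selects $\la=(m)$ and produces $\sum_m(-z)^m e_m(u)=\prod_i(1-(-w_i)^{|\phi|}z)$, completing part~(1).

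Part~(2) is handled by the symmetric computation: the $S$-self-contractions supply $\prod_{i<j}(1-w_j/w_i)$; the same substitution expands $Q(f_1;z).1=\prod_{\psi,k}(1-X_{k,\psi}(-z)^{|\psi|})$; and pairing against $\prod_{i,k}(1-X_{k,\phi}w_i)^{-1}=\sum_\la s_\la(X_{\cdot,\phi})s_\la(w_1,\ldots,w_l)$ via the standard Cauchy identity together with $\langle s_\la,e_n\rangle=\delta_{\la,(1^n)}$ produces $\prod_i(1-(-z)^{|\phi|}w_i)$. For the $|\phi|=1$ specialization, $[z^n]\prod_i(1+zw_i)=e_n(w_1,\ldots,w_l)$ is nonzero only when $l\geq n=|\la|$; combined with the automatic $l\leq|\la|$ this forces $\la=(1^m)$, after which $[w_1\cdots w_m]\,e_m(w_1,\ldots,w_m)\cdot\prod_{i<j}(1-w_j/w_i)=1=(-1)^{m+n}$ completes the proof.
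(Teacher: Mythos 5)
Your proof is correct and follows essentially the same route as the paper: both reduce the character via \eqref{ful:char} to the coefficient of a vertex-operator matrix element, extract the contraction factors $\prod_{i<j}(w_i-w_j)/(w_i-q^{-1}w_j)$ (resp.\ $\prod_{i<j}(1-w_j/w_i)$) by normal ordering, and then evaluate the remaining cross-pairing to get $\prod_i[1-(-w_i)^{|\phi|}z]$. The only difference is in the last step, where the paper pairs the two creation exponentials directly from the bracket of the Heisenberg generators, while you re-expand in the variables $X_{k,\psi}$ and invoke the Cauchy identities — an equivalent computation.
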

\begin{proof}
We only show the first statement, the second is similar. By Proposition \ref{ful:char} we know
\begin{align*}
    \chi^{(m)(\phi)}_{\mu(f_1)}=q^{n(\mu)}\langle S_m(\phi).1,Q_{\mu_1}\ldots Q_{\mu_r}.1\rangle.
\end{align*}
This inner product $\langle S_m(\phi).1,Q_{\mu_1}\ldots Q_{\mu_r}.1\rangle$ is the coefficient of $z^mw_1^{\mu_1}\ldots w_r^{\mu_r}$ in the following formal expression
\begin{align*}
 &\langle S(\phi;z).1,Q(f_1;w_r)\ldots Q(f_1;w_l).1 \rangle \\
 =&\prod_{1\leq i<j\leq r}\dfrac{w_i-w_j}{w_i-q^{-1}w_j}\langle S(\phi;z).1,:Q(f_1;w_1)\ldots Q(f_1;w_r):.1 \rangle \quad({\text{by relation \cite[Prop.2.9]{Jing2}}}) \\
=&\prod_{1\leq i<j\leq r}\dfrac{w_i-w_j}{w_i-q^{-1}w_j}\left\langle \exp\left(\sum_{n\geq1}\frac{1}{n}p_n(\phi)z^n\right),\prod_{i=1}^r\exp\left(\sum_{n\geq1}\dfrac{q^n-1}{n}p_n(f_1)w^n_i \right)\right\rangle \\
=&\prod_{1\leq i<j\leq r}\dfrac{w_i-w_j}{w_i-q^{-1}w_j}\prod_{i=1}^r[1-(-w_i)^{|\phi|}z]
\end{align*}
where the last equality is due to vertex operators.
\end{proof}

We know the formal series
    \begin{align*}
    \dfrac{w_i-w_j}{w_i-q^{-1}w_j}=\left(1-\dfrac{w_j}{w_i}\right)\sum_{k\geq0}^\infty \dfrac{w_j^k}{q^kw_i^k}=1+\sum_{k\geq1}(q^{-k}-q^{-k+1})\dfrac{w_j^k}{w_i^k}
    \end{align*}
Let us discuss the character value $\chi^{(m)(\phi)}_{\mu(f_1)}$ in several cases:

{\textbf{Case 1:}} If $l(\mu)=r<m$, it is clear that $\chi^{(m)(\phi)}_{\mu(f_1)}=0$.

{\textbf{Case 2:}} If $l(\mu)=r=m$, the coefficient of $z^m$ of $\prod_{i=1}^m[1-(-w_i)^{|\phi|}z]$ is $(-1)^{(|\phi|+1)m}w_1^{|\phi|}\ldots w^{|\phi|}_m$. Notice that the action of $\dfrac{w_j}{w_i},i<j$ will increase the power of $w_j$ and decrease the power of $w_i$. But $\mu_i\geq\mu_j$, it forces that $\mu_i=\mu_j$ for all $i,j=1,\ldots,m$ unless $\chi^{(m)(\phi)}_{\mu(f_1)}=0$.
\begin{align}
    \chi^{(m)(\phi)}_{\mu(f_1)}=\begin{cases}
        (-1)^{n+m}q^{n(m-1)/2}, &\mbox{if $\mu=(|\phi|^m)$}\\
        0. &\mbox{otherwise}
    \end{cases}
\end{align}

{\textbf{Case 3:}} If $l(\mu)=r>m$, we denote by $W$ the set $\{w_1^{\phi},\ldots,w_r^{\phi}\}$ and by $W_t$ the set $\{w_1^{|\phi|},\ldots,w_r^{|\phi|}\}$ except $w_t^{|\phi|}, 1\leq t\leq r$. We know that $\prod\limits_{i=1}^r[1-(-w_i)^{|\phi|}z]=\sum\limits_{i=1}^r(-1)^{({|\phi|}+1)i}e_i(W)z^i$, where $e_i$ is the elementary symmetric function in variables $w_1^{|\phi|},\ldots,w_r^{|\phi|}$. Denote by $D(m;W)$ the product $e_m(W)\prod\limits_{1\leq i<j\leq r}\dfrac{w_i-w_j}{w_i-q^{-1}w_j}$.

For any sequence $\alpha=(\alpha_1,\ldots,\alpha_r)$ of integers, we denote by $C^m_{\alpha}(q)$ the coefficient of $w_1^{\alpha_1}\ldots w_s^{\alpha_s}$ of $D(m;W)$.
We consider the polynomial $C_{\alpha}^m(q),m\geq1$.
\begin{align*}
&D(m;W)=e_m(w_1^{|\phi|},\ldots,w_r^{|\phi|})\prod_{1\leq i<j\leq r}\dfrac{w_i-w_j}{w_i-q^{-1}w_j}\\
=&\sum_{t=1}^rw_t^{|\phi|}e_{m-1}(W_t)\prod_{\substack{1\leq i<j\leq r\\i,j\neq t}}\dfrac{w_i-w_j}{w_i-q^{-1}w_j}\prod_{j=1}^{t-1}\dfrac{w_j-w_t}{w_j-q^{-1}w_t}\prod_{j=t+1}^r\dfrac{w_t-w_j}{w_t-q^{-1}w_j} \\
=&\sum_{t=1}^r\sum_{k_{t1},\ldots,k_{tr}}q^{-\sum\limits_{j=1}^rk_{tj}}(1-q)^{\sum\limits_{j=1}^r\delta_{k_{tj}>0}}D(m-1;W_t) w_t^{|\phi|+\sum\limits_{j=1}^{t-1}k_{tj}-\sum\limits_{j=t+1}^rk_{tj}}w_1^{-k_{ti}}\ldots w_{t-1}^{-k_{t,t-1}}w_{t+1}^{k_{t,t+1}}\ldots w_r^{k_{tr}}
\end{align*}
where $k_{tt}=0$ and $\delta_{k_{tj}>0}=\begin{cases}
    1,&\mbox{if $k_{tj}>0$}\\
    0, &\mbox{if $k_{tj}=0$}
\end{cases}$.

For a sequence $\alpha=(\alpha_1,\dots,\alpha_r)$, we define two new sequences $\alpha^{(i)},\alpha^{\{i\}}$ associated with $\alpha$ as follows
\begin{align*}
    \alpha^{(i)}=(\alpha_1,\ldots,\alpha_{(i-1)},\alpha_{(i+1)},\ldots,\alpha_r),\quad
  \alpha^{\{i\}}=(\alpha_1,\ldots,\alpha_{(i-1)},-\alpha_{(i+1)},\ldots,-\alpha_r)
\end{align*}
In particular $\alpha^{(1)}=\alpha^{[1]}$, $\alpha^{\{1\}}=-\alpha^{[1]}$ and $\alpha^{(r)}=\alpha^{\{r\}}$.
Since the coefficient of $w_1^{\alpha_1}\ldots w_{t-1}^{\alpha_{t-1}}w_{t+1}^{\alpha_{t+1}}\ldots w_r^{\alpha_r}$ of $D(m-1;W_t)$ is $C_{\alpha^{(t)}}^{m-1}(q)$, we have
\begin{align}
    C_{\alpha}^m(q)=\sum_{t=1}^r\sum_{\gamma}q^{-|\gamma|}(1-q)^{l(\gamma)}C_{\alpha^{(t)}+\gamma^{\{t\}}}^{m-1}
\end{align}
summed over all $\gamma\in\mathcal C$ such that $\gamma_t=0$, $|\gamma^{\{t\}}|=|\phi|-\alpha_t$ and $l(\gamma)\leq r$.  Similarly, for $m=0$
\begin{align}
    C_{\alpha}^0(q)=\sum_{\beta}q^{-|\beta|}(1-q)^{l(\beta)}C_{\alpha^{[1]}-\beta}^0(q)
\end{align}
summed over all compositions $\beta\vDash-\alpha_1$. For instance $\alpha=(\alpha_1,\alpha_2)$, it is easy to get
$C^0_{(\alpha_1, \alpha_2)}(q)=0$ for $\alpha_1+\alpha_2\neq 0$ or $\alpha_2<0$ and
$C^1_{(\alpha_1, \alpha_2)}(q)=0$ for $\alpha_1+\alpha_2\neq d$ or $\alpha_2<0$. The other nontrivial values are
\begin{align*}
    C_{(\alpha_1,\alpha_2)}^0(q)=\begin{cases}
    1, &\mbox{if $\alpha=(0,0)$}\\
    q^{-\alpha_2}(1-q). &\mbox{if $\alpha_1<0,\alpha_2>0$}
    \end{cases},\quad
    C_{(\alpha_1,\alpha_2)}^1(q)=\begin{cases}
    1, &\mbox{if $\alpha=(d,0)$ or $(0,d)$}\\
    q^{-\alpha_2}(1-q), &\mbox{if $\alpha_1>0,\alpha_2>0$}\\
    q^{\alpha_1}(1-q). &\mbox{if $\alpha_1<0,\alpha_2>0$}
    \end{cases}
\end{align*}

For a partition $\mu$, $C_{\mu}^m(q)$ is the coefficient of $w_1^{\mu_1}\ldots w_r^{\mu_r}$ in $D(m;W)$. Notice that $\mu_1\geq\ldots\geq\mu_r$, the only contributing term
in the expansion of $D(m;w)$ is $w_1^d\prod\limits_{2\leq i<j\leq r}\dfrac{w_i-w_j}{w_i-q^{-1}w_j}D(m-1;W_1)$, thus,
\begin{align}
    C_{\mu}^m(q)=\sum_{\gamma}q^{-|\gamma|}(1-q)^{l(\gamma)}C_{\mu^{(1)}-\gamma}^{m-1}
\end{align}
summed over all $\gamma\vDash |\phi|-\mu_1$. It is clear that $C_{\mu}^m(q)=0$ if $\mu_1>d$. The corresponding character value
\begin{align}
    \chi^{(m)(\phi)}_{\mu(f_1)}=(-1)^{n+m}q^{n(\mu)+\mu_1-|\phi|}\sum_{\gamma}(1-q)^{l(\gamma)}C_{\mu^{(1)}-\gamma}^{m-1}
\end{align}

If $|\phi|=1$, we know $\chi_{\mu(f_1)}^{(n)(\phi)}$ is the coefficient of $z^nw^{\mu}$ of the formal series \eqref{series}, thus
\begin{align*}
    \chi_{\mu(f_1)}^{(n)(\phi)}=\begin{cases}
     q^{n(n-1)/2}, &\mbox{if $\mu=(1^n)$}  \\
      0. &\mbox{if $\mu\neq(1^n)$}
    \end{cases}
\end{align*}

Let $\phi_1$ be the $F$-orbit of the trivial character $\hat{1}\in M_1^*$. For each $f\in\Phi$ We have the bracket product
\begin{align*}
    \left[\exp\left(\sum_{n\geq1}\dfrac{1}{n}p_n(\phi_1)z^n\right),\exp\left(\sum_{n\geq1}\dfrac{q^{n|f|}-1}{n}p_n(f)w_i^n\right)\right]=1-(-z)^{|f|}w.
\end{align*}
From Proposition \ref{single1}, for each $\bmu\in\mathcal P_n^{\Phi}$, we similarly deduce that $\chi^{(n)(\phi_1)}_{\bmu}$ is the coefficient $z^nw^{\bmu}$ of the following formal series
\begin{align*}
    q^{n(\bmu)}\prod_{1\leq i<j\leq l(\bmu(f))}\dfrac{w_i-w_j}{w_i-q^{-|f|}w_j}\prod_{i=1}^{l(\bmu(f))}[1-(-z)^{|f|}w_i].
\end{align*}
Then the following proposition is immediate.

\begin{prop}\cite[Theorem 9.2]{CLT}\cite[1.13]{SZ}\label{St}
    Let $\bmu\in\mathcal P_n^{\Phi}$, the irreducible character $\chi^{(n)(\phi_1)}_{\bmu}$ is given by
    \begin{align}
        \chi^{(n)(\phi_1)}_{\bmu}=\begin{cases}
            (-1)^{n-\sum_f l(\bmu(f))}q^{n(\bmu)},&\mbox{$\bmu(f)=(1^{l(\bmu(f))})$}\\
            0. &\mbox{otherwise}
        \end{cases}
    \end{align}
\end{prop}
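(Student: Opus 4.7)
The strategy is to read off the coefficient of $z^n w^{\bmu}$ from the product formula displayed immediately before the statement, extending to all orbits the single-orbit analysis carried out in Proposition \ref{single1}. Since the series factors over $f\in\Phi$ (each $f$ carrying its own independent block of variables $w_1^{(f)},\ldots,w_{l(\bmu(f))}^{(f)}$), it is enough to analyze one $f$-block
\[
\prod_{1\leq i<j\leq l(\bmu(f))}\dfrac{w_i-w_j}{w_i-q^{-|f|}w_j}\prod_{i=1}^{l(\bmu(f))}\bigl[1-(-z)^{|f|}w_i\bigr]
\]
and extract the coefficient of $\prod_i w_i^{\bmu(f)_i}$ accompanied by a compatible power of $z$; these contributions are then multiplied together and matched against $z^n$.

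The first key observation is that the Cauchy-type factor $\prod_{i<j}\frac{w_i-w_j}{w_i-q^{-|f|}w_j}$, expanded as a power series in the quotients $w_j/w_i$ (as is standard in these vertex operator computations), is homogeneous of total $w$-degree $0$ in each of its monomials, and has constant term $1$. The second factor, expanded as a sum over subsets $I\subseteq\{1,\ldots,l(\bmu(f))\}$, yields the term $(-1)^{|I|}(-z)^{|f||I|}\prod_{i\in I}w_i$, which has total $w$-degree $|I|$. Combining the two, every surviving monomial of the $f$-block has total $w$-degree equal to $|I|$, so matching against $w^{\bmu(f)}$ forces $|I|=|\bmu(f)|$. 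Since $|I|\leq l(\bmu(f))$, this imposes $|\bmu(f)|\leq l(\bmu(f))$, i.e.\ every part of $\bmu(f)$ equals $1$, proving the vanishing statement unless $\bmu(f)=(1^{l(\bmu(f))})$ for each $f$.

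When $\bmu(f)=(1^{m_f})$ with $m_f=l(\bmu(f))$, the unique contributing subset is $I=\{1,\ldots,m_f\}$, and one needs only the constant term (in the quotients $w_j/w_i$) of the Cauchy factor, which is $1$. The $f$-block therefore contributes exactly $(-1)^{m_f}(-z)^{|f|m_f}=(-1)^{m_f(|f|+1)}z^{|f|m_f}$. Taking the product over all $f\in\Phi$, the overall power of $z$ is $\sum_f |f|m_f=\|\bmu\|=n$, matching what is required, and the sign simplifies as
\[
\prod_{f}(-1)^{m_f(|f|+1)}=(-1)^{\sum_f|f|m_f+\sum_f m_f}=(-1)^{n+\sum_f l(\bmu(f))}=(-1)^{n-\sum_f l(\bmu(f))}.
\]
Multiplying by the prefactor $q^{n(\bmu)}$ produces the stated formula.

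The only delicate point is the uniform degree-counting argument across all $f\in\Phi$: one must expand the rational factor as a power series in $w_j/w_i$ in order that the total $w$-degree is truly conserved and the constant term is well defined. Once this standard expansion convention is fixed, every remaining step is a direct combinatorial bookkeeping of signs and powers, exactly as anticipated by the preceding ``is immediate'' remark.
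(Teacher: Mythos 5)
Your proposal is correct and follows exactly the route the paper intends: the paper derives the same product formula for the generating series immediately before the proposition and declares the result ``immediate,'' and your coefficient extraction (degree conservation in the Cauchy factor forcing $|\bmu(f)|\le l(\bmu(f))$, hence all parts equal to $1$, followed by the sign bookkeeping $(-1)^{\sum_f m_f(|f|+1)}=(-1)^{n-\sum_f l(\bmu(f))}$) is precisely the omitted verification. No gaps.
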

\begin{rem}
    Recall that an element $x\in\GL_n(\f_q)$ is semisimple if $x$ is of type $\bmu(f)=(1^{l(\bmu(f))})$ for each $f\in\Phi$. This irreducible character $\chi^{(n)(\phi_1)}$ is the famous Steinberg character.
\end{rem}

\begin{cor}
 (1)    For each $\Phi^*$-colored partition $\tbl\in\mathcal P_n^{\Phi^*}$ such that $l(\tbl(\phi))=1$ for each $\phi$, let $\mu=(\mu_1,\ldots,\mu_r)\vdash n$ be a partition, the irreducible character $\chi^{\tbl}_{\mu(f_1)}$ is the coefficient of $z^{\tbl}w^{\mu}$ of the formal series
     \begin{align}
         q^{n(\mu)}\prod_{1\leq i<j\leq r}\dfrac{w_i-w_j}{w_i-q^{-1}w_j}\prod_{\phi\in\Phi^*}[1-(-w_i)^{|\phi|}z]
     \end{align}
     Moreover, if all $|\phi|=1$ for $\tbl(\phi)\neq0$, then we have
     \begin{align*}
        \chi_{\mu(f_1)}^{\tbl}=\begin{cases}
            q^{n(n-1)/2}, &\mbox{if $\mu=(1^n)$}  \\
            0. &\mbox{if $\mu\neq(1^n)$}
        \end{cases}
    \end{align*}

(2) For each $\tbl\in\mathcal P_n^{\Phi^*}$, the irreducible character $\chi^{\tbl}_{(n)(f_1)}$ is given by
\begin{align}
    \chi^{\tbl}_{(n)(f_1)}=\begin{cases}
        (-1)^{n+\sum_{\phi}l(\tbl(\phi))},&\mbox{if $\tbl(\phi)=(1^{l(\tbl(\phi))})$ for each $\phi$ }\\
        0.&\mbox{otherwise}
    \end{cases}
\end{align}
\end{cor}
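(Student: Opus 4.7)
The plan is to combine the vertex-operator generating-function technique of Proposition \ref{single1} with the tensor-product structure $\mathcal F_G = \bigotimes_\phi \Lambda_{\mathbb C}(\phi)$, and to use Schur-function orthogonality for part (2).

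For part (1), I would run the computation in the proof of Proposition \ref{single1}(1) in parallel across all orbits by assigning a formal variable $z_\phi$ to each Schur vertex operator $S(\phi; z_\phi)$. Because Schur vertex operators on distinct tensor factors of $\mathcal F_G$ commute, the inner product of $\prod_\phi S(\phi; z_\phi).1$ against $Q(f_1;w_1)\cdots Q(f_1;w_r).1$ splits: the normal ordering \cite[Prop.~2.9]{Jing2} still extracts the Hall--Littlewood factor $\prod_{1\leq i<j\leq r}(w_i-w_j)/(w_i-q^{-1}w_j)$, and the remaining Heisenberg contraction factors as a double product over pairs $(\phi, i)$, each contributing $1-(-w_i)^{|\phi|}z_\phi$ exactly as in the single-orbit computation. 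Multiplying by $q^{n(\mu)}$ and extracting the coefficient of $z^{\tbl}w^\mu := \prod_\phi z_\phi^{\tbl(\phi)_1}\prod_i w_i^{\mu_i}$ then produces the formula via \eqref{ful:char}. For the moreover clause, I would specialize to $|\phi|=1$ on each non-empty orbit so that $[z_\phi^{m_\phi}]\prod_i(1+w_iz_\phi) = e_{m_\phi}(w_1,\ldots,w_r)$ with $m_\phi = \tbl(\phi)_1$, and then use the Laurent expansion $(w_i-w_j)/(w_i-q^{-1}w_j) = 1+\sum_{k\geq 1}q^{-k}(1-q)(w_j/w_i)^k$ displayed earlier in the paper to verify the coefficient extraction case by case.

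For part (2), I would compute $Q_n(f_1).1$ explicitly and invoke Hall orthogonality in each $\Lambda_{\mathbb C}(\phi)$. Substituting \eqref{linear2:f=1} into the exponential defining $Q(f_1;w).1$ and regrouping by orbit gives
\[
Q(f_1;w).1 = \exp\!\Bigl(\sum_{\phi\in\Phi^*}\sum_{k\geq 1}\frac{(-1)^{k|\phi|-1}}{k}p_k(\phi)\,w^{k|\phi|}\Bigr) = \prod_{\phi\in\Phi^*}\sum_{j\geq 0}(-1)^{j(|\phi|-1)}e_j(\phi)\,w^{j|\phi|},
\]
where the second equality is the standard generating identity for elementary symmetric functions applied orbit by orbit. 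Taking $[w^n]$ writes $Q_n(f_1).1$ as a sum over sequences $(n_\phi)$ with $\sum_\phi n_\phi|\phi|=n$ of $\prod_\phi (-1)^{n_\phi(|\phi|-1)}\,s_{(1^{n_\phi})}(\phi)$. Applying $\langle s_\la(\phi), s_\mu(\phi')\rangle = \delta_{\phi,\phi'}\delta_{\la,\mu}$ collapses the inner product $\langle \prod_\phi s_{\tbl(\phi)}(\phi), Q_n(f_1).1\rangle$ to the single term in which every $\tbl(\phi) = (1^{l(\tbl(\phi))})$, and the resulting sign simplifies to $(-1)^{n-\sum_\phi l(\tbl(\phi))} = (-1)^{n+\sum_\phi l(\tbl(\phi))}$ upon using $\sum_\phi |\phi| l(\tbl(\phi)) = n$.

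The main obstacle I anticipate is the moreover clause of part (1): the coefficient extraction from the non-symmetric rational factor combined with $\prod_\phi e_{m_\phi}$ requires tracking contributions from each Laurent term $\prod_{i<j}(w_j/w_i)^{k_{ij}}$ paired against each monomial in the product of elementaries, which is combinatorially intricate even for small examples. Part (2), by contrast, is a clean orthogonality computation once the generating function for $Q(f_1;w).1$ has been factored orbit by orbit.
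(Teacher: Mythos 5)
Your strategy for the main formula in (1) is the right one and is essentially the multi-orbit version of what the paper does: assigning one variable $z_\phi$ per orbit, peeling off the Hall--Littlewood factor $\prod_{i<j}(w_i-w_j)/(w_i-q^{-1}w_j)$ via \cite[Prop.~2.9]{Jing2}, and contracting each $S(\phi;z_\phi)$ against each $Q(f_1;w_i)$ to produce the factors $1-(-w_i)^{|\phi|}z_\phi$ exactly parallels the displayed computation in the proof of Proposition \ref{single1}(1). Your part (2) is correct and complete, and it takes a genuinely different route from the paper's (implicit) one: rather than extracting coefficients from a multivariable series as in Proposition \ref{single1}(2), you expand $Q_n(f_1).1$ orbit by orbit via \eqref{linear2:f=1} into $\sum\prod_\phi(-1)^{n_\phi(|\phi|-1)}e_{n_\phi}(\phi)$ and finish with Schur orthogonality; the sign $(-1)^{n-\sum_\phi l(\tbl(\phi))}=(-1)^{n+\sum_\phi l(\tbl(\phi))}$ checks out. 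This is cleaner and makes the ``otherwise $0$'' part of (2) transparent.

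The genuine gap is the ``moreover'' clause of (1), which you flag as your main obstacle and do not close --- and in fact it cannot be closed as stated. When $\tbl$ is supported on more than one orbit the claim fails: take $n=2$ and $\tbl(\phi_1)=\tbl(\phi_2)=(1)$ with $|\phi_1|=|\phi_2|=1$. The coefficient of $z_{\phi_1}z_{\phi_2}w_1w_2$ in $q\cdot\frac{w_1-w_2}{w_1-q^{-1}w_2}(w_1+w_2)^2$ is $q\bigl(2+(q^{-1}-1)\bigr)=q+1$, the degree of the corresponding principal-series character, not $q^{2\cdot1/2}=q$; and at $\mu=(2)$ the coefficient is $1$, not $0$ (consistent with your part (2), which gives $(-1)^{2+2}=1$). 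So the case-by-case verification you propose can only succeed under the additional hypothesis that $\tbl$ is supported on a single orbit, i.e.\ $\tbl=(n)(\phi)$ with $|\phi|=1$, which is precisely the computation the paper already carries out in the ``If $|\phi|=1$'' paragraph following Proposition \ref{single1}. You should either restrict the clause to that case or replace $q^{n(n-1)/2}$ at $\mu=(1^n)$ by the degree $d(\tbl)$ from \eqref{degree}; as written, no proof of the clause exists because the clause is false.
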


We now give the Murnaghan-Nakayama rule for unipotent classes, expresses the irreducible character of $\GL_n(\f_q)$ in term of
the general linear groups of lower dimension. For each $\tbtau\in\mathcal C^{\Phi^*}$ we define the polynomial
\begin{align}
    \mathcal{B}(q;\tbtau)=\prod_{\phi\in\Phi^*}\prod_{i=1}^{l(\tbtau(\phi))}B(q^{|\phi|};\tbtau(\phi)_i),\quad B(q^{|\phi|};\tbtau(\phi)_i)=\begin{cases}
    -1,&\mbox{if $\tbtau(\phi)_i=1$}\\
    q^{-1}-1.&\mbox{if $\tbtau(\phi)_i>1$}
\end{cases}
\end{align}
\begin{thm}\label{table}
    For each $\Phi^*$-colored partition $\tbl\in\mathcal P_n^{\Phi^*}$, let $\mu=(\mu_1,\ldots,\mu_r)\vdash n$ be a partition, the irreducible character $\chi^{\tbl}_{\mu(f_1)}$ is given by
    \begin{align}\label{cha table}
\chi^{\tbl}_{\mu(f_1)}=(-1)^{\mu_1}q^{|\mu|}\sum_{\tbtau\subset\underline{\tbl}}\mathcal B(q;\tbtau)\sum_{\trh}\chi^{\trh'}_{\mu^{[1]}(f_1)}.
    \end{align}
summed over all $\tbtau\in\mathcal C^{\Phi^*}$ such that $\tbtau(\phi)_i>0, 1\leq i\leq l(\tbtau(\phi))$ and $\trh\in\mathcal P^{\Phi^*}_{(\|\tbtau\|-\mu_1)}$ with $l(\trh(\phi))=1$ for each $\phi\in\Phi^*$, the corresponding $\trh'$ is defined in \eqref{trh-trh} associated with $\trh$.
\end{thm}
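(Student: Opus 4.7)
The plan is to proceed by induction on $l(\mu)$, stripping off the first part $\mu_1$ of the cycle type and reducing $\chi^{\tbl}_{\mu(f_1)}$ to characters $\chi^{\trh'}_{\mu^{[1]}(f_1)}$ on unipotent classes of $\GL_{n-\mu_1}(\f_q)$. Starting from \eqref{ful:char} and factoring $Q_{\mu(f_1)}.1 = Q_{\mu_1}(f_1)\,Q_{\mu^{[1]}(f_1)}.1$, the first step is to move $Q_{\mu_1}(f_1)$ across the pairing as its adjoint $Q^*_{\mu_1}(f_1)$ (the two operators are adjoint by the definition of $p_n^*(f)$ under \eqref{e:Heisenberg2}):
\begin{equation*}
\chi^{\tbl}_{\mu(f_1)} = q^{n(\mu)}\bigl\langle Q^*_{\mu_1}\prod_{\phi\in\Phi^*} S_{\tbl(\phi)}.1,\; Q_{\mu^{[1]}(f_1)}.1\bigr\rangle.
\end{equation*}
Corollary \ref{MMN} then expands the left-hand entry as a sum over compositions $\tbtau\subset\underline{\tbl}$ of the terms $\mathcal A(q;\tbtau)(-1)^{\|\tbtau\|}\prod_\phi S_{\tbl(\phi)-\tbtau(\phi)}\,Q^*_{\mu_1-\|\tbtau\|}(f_1).1$.

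Next, the residual vector $Q^*_{\mu_1-\|\tbtau\|}(f_1).1$ is evaluated via Theorem \ref{thm:q-S}. Since $Q^*_k(f_1).1=\delta_{k,0}$ for $k\geq 0$, only indices with $\|\tbtau\|\geq \mu_1$ contribute; when $\|\tbtau\|>\mu_1$ the residual equals $(-1/q)^{\|\tbtau\|-\mu_1}\sum_{\trh}\prod_\phi s_{\trh(\phi)}(\phi)$, summed over single-row $\Phi^*$-colored partitions $\trh$ of weight $\|\tbtau\|-\mu_1$. Writing $s_{\trh(\phi)}(\phi)=S_{r_\phi}(\phi).1$ for $\trh(\phi)=(r_\phi)$ and using that $S_m(\phi)$ and $S_n(\psi)$ commute for $\phi\neq\psi$, each summand takes the form $\prod_\phi S_{[\tbl(\phi)-\tbtau(\phi),\,r_\phi]}.1$. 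The straightening relation \eqref{rel:Schur} then converts each concatenated composition into $\pm S_{\trh'(\phi)}.1$, where $\trh'(\phi)$ is the partition prescribed by \eqref{trh-trh}. After this rearrangement, the surviving inner product is exactly the character expression \eqref{ful:char} for the rank-$(n-\mu_1)$ group, producing $q^{-n(\mu^{[1]})}\chi^{\trh'}_{\mu^{[1]}(f_1)}$.

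The last task is to assemble the scalar prefactors. Using $n(\mu)-n(\mu^{[1]})=|\mu|-\mu_1$ together with the identity $A(q^{|\phi|};k)q^{-k|\phi|}=B(q^{|\phi|};k)$ for every $k\geq 1$, the accumulated constant $q^{n(\mu)}\cdot(-1)^{\|\tbtau\|}(-1/q)^{\|\tbtau\|-\mu_1}\cdot\mathcal A(q;\tbtau)\cdot q^{-n(\mu^{[1]})}$ telescopes to $(-1)^{\mu_1}q^{|\mu|}\mathcal B(q;\tbtau)$, which delivers \eqref{cha table}. I expect the main obstacle to be the straightening step: one has to track carefully the signs and vanishing behaviour produced by iterating \eqref{rel:Schur} as $r_\phi$ is woven into the composition $\tbl(\phi)-\tbtau(\phi)$, and then verify that the surviving partition matches the prescription \eqref{trh-trh} while the configurations that straighten to zero are consistently excluded from the sum.
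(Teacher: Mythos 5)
Your proposal is correct and follows essentially the same route as the paper: move $Q_{\mu_1}$ across the pairing as $Q^*_{\mu_1}$, expand via Corollary \ref{MMN}, evaluate the residual $Q^*_{\mu_1-\|\tbtau\|}.1$ by Theorem \ref{thm:q-S}, straighten the concatenated compositions with \eqref{rel:Schur} into $S_{\trh'(\phi)}.1$, and collect the prefactors using $n(\mu)-n(\mu^{[1]})=|\mu^{[1]}|$ and $A(q^{|\phi|};k)q^{-k|\phi|}=B(q^{|\phi|};k)$. The only cosmetic difference is that you frame the reduction as an induction on $l(\mu)$, whereas the theorem is itself the single-step recursion, so one stripping step suffices; your flagged concern about signs in the straightening step is exactly the point the paper also defers to a remark.
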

\begin{proof}
    It follows from Theorem \ref{thm:q-S} and Corollary \ref{MMN} that
    \begin{align*}
    &\langle S_{\tbl}.1, Q_{\mu}.1\rangle=\left\langle Q_{\mu_1}^*\prod_{\phi\in\Phi^*}S_{\tbl(\phi)}.1,Q_{\mu^{[1]}}.1 \right\rangle\\
   =&\left\langle\sum_{\tbtau\subset\underline{\tbl}}\mathcal A(q;\tbtau)(-1)^{\|\tbtau\|}\prod_{\phi\in\Phi^*}S_{\tbl(\phi)-\tbtau(\phi)}(-\dfrac{1}{q})^{\|\tbtau\|-\mu_1}\sum_{\trh}S_{\trh(\phi)}.1, Q_{\mu^{[1]}}.1 \right\rangle
  \end{align*}
where $\|\trh\|=\|\tbtau\|-\mu_1$, $l(\trh(\phi))=1$ for each $\phi$. We remark that if $\|\tbtau\|-\mu_1<0$, $Q^*_{\mu_1-\|\tbtau\|}.1=0$. For each $\phi\in\Phi^*$,
$\tbl(\phi)-\tbtau(\phi)$ can always be normalized into a partition by using since \eqref{rel:Schur} up to a sign. We define a $\Phi^*$-colored composition $\trh'$ associated with $\trh$ such that
\begin{align}\label{trh-trh}
    \trh'(\phi)=[\tbl(\phi)-\tbtau(\phi),\trh(\phi)].
\end{align}
The above inner product can be written as
\begin{align*}
&\left\langle\sum_{\tbtau\subset\underline{\tbl}}\mathcal A(q;\tbtau)q^{\mu_1-\|\tbtau\|}(-1)^{\mu_1}\sum_{\trh}\prod_{\phi\in\Phi^*}S_{\trh'(\phi)}, Q_{\mu^{[1]}}.1  \right\rangle=\left\langle\sum_{\tbtau\subset\underline{\tbl}}\mathcal B(q;\tbtau)q^{\mu_1}(-1)^{\mu_1}\sum_{\trh}\prod_{\phi\in\Phi^*}S_{\trh'(\phi)}, Q_{\mu^{[1]}}.1  \right\rangle
\end{align*}
Notice that $n(\mu(f_1))-n(\mu^{[1]}(f_1))=|\mu^{[1]}|$, then the formula \eqref{cha table} is proved.
\end{proof}

\begin{rem}
It should be pointed out that we don't consider $\tbl(\phi)-\tbtau(\phi)$ as a sequence rather than a partition. Before using Theorem \ref{MMN}, the readers need to arrange the order of $S_{\tbl(\phi)-\tbtau(\phi)}.1$, even the order of $S_{\tbl(\phi)-\tbtau(\phi)}S_{\trh(\phi)}.1$ themselves.
\end{rem}

Recall that a partition is hook-shaped of $n$ if it is
of the form $(k,1^{n-k}),1\leq k\leq n$.

\begin{prop}\label{hook}
    For each $\Phi^*$-colored partition $\tbl\in\mathcal P_n^{\Phi^*}$, the irreducible character $\chi^{\tbl}_{(k,1^{n-k})(f_1)}$ is given by
    \begin{align}
        \chi^{\tbl}_{(k,1^{n-k})(f_1)}=(-1)^{\mu_1}q^{|\mu|}\sum_{\tbtau\subset\underline{\tbl}}\mathcal B(q;\tbtau)\sum_{\trh}d(\trh')
    \end{align}
summed over all $\tbtau\in\mathcal C^{\Phi^*}$ such that $\tbtau(\phi)_i>0, 1\leq i\leq l(\tbtau(\phi))$ and $\trh\in\mathcal P^{\Phi^*}_{(\|\tbtau\|-\mu_1)}$ with $l(\trh(\phi))=1$ for each $\phi\in\Phi^*$, the corresponding $\trh'$ is defined in \eqref{trh-trh} associated with $\trh$. And $d(\trh')$ is the degree of $\trh'$ as in \eqref{degree}.
\end{prop}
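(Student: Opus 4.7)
The plan is to derive this proposition as a direct specialization of Theorem \ref{table} to hook-type cycle data. Setting $\mu = (k, 1^{n-k})$, we have $\mu_1 = k$ and $\mu^{[1]} = (1^{n-k})$, so the recursion \eqref{cha table} immediately gives
\begin{align*}
\chi^{\tbl}_{(k,1^{n-k})(f_1)} = (-1)^{\mu_1}q^{|\mu|}\sum_{\tbtau\subset\underline{\tbl}}\mathcal B(q;\tbtau)\sum_{\trh}\chi^{\trh'}_{(1^{n-k})(f_1)},
\end{align*}
with the inner sum ranging over exactly the same $\trh$ and $\trh'$ as in the theorem.

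The key observation is that the conjugacy class of type $(1^{n-k})(f_1)$ in $\GL_{n-k}(\f_q)$ is precisely the identity class: the identity matrix is a sum of $n-k$ Jordan blocks of size one with eigenvalue one, so its cycle-type records the partition $(1^{n-k})$ at the irreducible polynomial $f_1 = t-1$ and the empty partition at every other $f \in \Phi$. Consequently, for any $\Phi^*$-colored partition $\trh'$, the character value $\chi^{\trh'}_{(1^{n-k})(f_1)}$ is nothing but $\chi^{\trh'}(\mathrm{Id})$, which equals the degree $d(\trh')$ recorded in \eqref{degree}. Substituting this equality term by term into the displayed formula yields the proposition.

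The proof is therefore almost immediate once Theorem \ref{table} is in hand; there is no hard step. The only bookkeeping is to ensure that the conversion of the composition $\trh'(\phi) = [\tbl(\phi)-\tbtau(\phi), \trh(\phi)]$ into an honest $\Phi^*$-colored partition via the Schur straightening relation \eqref{rel:Schur} is applied identically on both sides of the identification $\chi^{\trh'}_{(1^{n-k})(f_1)} = d(\trh')$, so that any signs produced by reordering are tracked consistently. Since this straightening already takes place inside the statement of Theorem \ref{table}, no new subtlety arises, and the proposition follows at once.
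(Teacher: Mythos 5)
Your proposal is correct and follows essentially the same route as the paper: the paper likewise obtains the result by specializing Theorem \ref{table} to $\mu=(k,1^{n-k})$ and invoking Green's degree formula \eqref{degree}, the point being that $\mu^{[1]}(f_1)=(1^{n-k})(f_1)$ is the identity class of $\GL_{n-k}(\mathbb F_q)$, so $\chi^{\trh'}_{(1^{n-k})(f_1)}=d(\trh')$. Your version actually spells out this identification (and the straightening bookkeeping) more explicitly than the paper does.
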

\begin{proof}
By Green's work \cite{jaG}(cf. \cite[pp. 284-286]{Mac} and our approach \cite[Sec.5]{JW}), the degree
of the irreducible character is given by
    \begin{align}\label{degree}
        d(\tbl)=\prod_{i=1}^n(q^i-1)\prod_{\phi\in\Phi^*}\dfrac{q_\phi^{n(\tbl(\phi)')}}{\prod_{x\in\tbl(\phi)}(q_\phi^{h(x)}-1)}
    \end{align}
    where $h(x)=\la_i+\la_j'-i-j+1$ is the hook length of the node $x=(i,j)$ inside the Young diagram of shape $\tbl(\phi)$.
    The proposition is then followed by Theorem \ref{table}.
\end{proof}

\bigskip
\bibliographystyle{plain}

\end{document}